\numberwithin{equation}{section}
\newtheorem{theorem}{Theorem}[section]
\newtheorem{lemma}[theorem]{Lemma}
\newtheorem{corollary}[theorem]{Corollary}
\newtheorem{definition}[theorem]{Definition}
\newtheorem{remark}[theorem]{Remark}
\newcommand{\supp}{\operatorname{supp}}
\def\supp{\mathop{\rm supp}}
\def\X{{\mathrm{X}}}
\begin{document}
\allowdisplaybreaks

\title[Compactness of Riesz transform commutator on stratified Lie groups]
{Compactness of Riesz transform commutator on stratified Lie groups}
\author{Peng Chen, Xuan Thinh Duong, Ji Li and Qingyan Wu}

\address{Peng Chen, Department of Mathematics, Sun Yat-Sen University, Guangzhou, 510275, China}
\email{chenpeng3@mail.sysu.edu.cn}

\address{Xuan Thinh Duong, Department of Mathematics, Macquarie University, NSW, 2109, Australia}
\email{xuan.duong@mq.edu.au}

\address{Ji Li, Department of Mathematics, Macquarie University, NSW, 2109, Australia}
\email{ji.li@mq.edu.au}

\address{Qingyan Wu, Department of Mathematics\\
         Linyi University\\
         Shandong, 276005, China
         }
\email{wuqingyan@lyu.edu.cn}

\subjclass[2010]{43A15, 43A17, 22E30, 42B20, 43A80}
\keywords{Stratified Lie groups, Riesz transforms,  VMO space, commutator, compact operator}

\begin{abstract}
Let $\mathcal G$ be a stratified  Lie group and $\{\X_j\}_{1 \leq j \leq n}$ a basis for the left-invariant vector fields of degree one on $\mathcal G$.
Let $\Delta = \sum_{j = 1}^n \X_j^2 $ be the sub-Laplacian
on $\mathcal G$. The $j^{\mathrm{th}}$  Riesz transform on $\mathcal G$  is defined by $R_j:= \X_j (-\Delta)^{-\frac{1}{2}}$,
 $1 \leq j \leq n$. 
In this paper, we provide a concrete construction of the ``twisted truncated sector'' which is related to the pointwise lower bound of the kernel of $R_j$ on $\mathcal G$. Then 
 we obtain the characterisation of compactness of the commutators of  $R_j$ with respect to VMO, the space of functions with vanishing mean oscillation on $\mathcal G$.
\end{abstract}

\maketitle


\section{Introduction and statement of main results}
\setcounter{equation}{0}


A central topic of modern harmonic analysis is to study singular integral operators and their applications in characterizing function spaces.
In \cite{Cal}, Calder\'on introduced the commutator of a singular integral operator $T$ with a symbol $b$ as
$$[b,T](f):=bT(f)-T(bf).$$
When $T$ is the Riesz transform $R_j={\partial\over\partial x_j}\Delta^{-{1\over2}}$ on the Euclidean space $\mathbb R^n$, {Coifman, Rochberg and Weiss  \cite{CRW}  showed that the commutator $[b,R_j]$ is bounded on $L^p(\mathbb R^n)$ with $1<p<\infty$ if and only if $b\in {\rm BMO}(\mathbb R^n)$, which is the space of functions with bounded mean oscillation.  
}
Uchiyama \cite{U1} then showed that $[b,R_j]$ is compact on $L^p(\mathbb R^n)$ with $1<p<\infty$ if and only if $b\in {\rm VMO}(\mathbb R^n)$, the space of functions with vanishing mean oscillation on $\mathbb R^n$.
\color{black}
 Later on and recently, there has been an intensive study of the compactness of commutators of singular integrals in many different settings, such as the Riesz transform associated with Bessel operator on the positive real line, the Cauchy's integrals on the real line, the Calder\'on--Zygmund operator associated with homogeneous kernels ${\Omega(x)\over |x|^n}$ on $\mathbb R^n$, and the multilinear Riesz transforms,  see for example \cite{CCHTW, DLMWY, GWY, KL1,KL2, LNWW,TYY} and related references therein.

Beyond these operators in the Euclidean setting, it is natural to ask whether this characterisation of compactness of commutators also holds for Riesz transforms associated with the sub-Laplacian on Heisenberg groups $\mathbb H^n$, which is the boundary of the Siegel upper half space in $\mathbb C^n$.  Recall that $\mathbb H^n$ and the Siegel upper half space are holomorphically equivalent to the unit sphere and unit ball in $\mathbb C^n$, and hence the role of Riesz transform associated with the sub-Laplacian on  $\mathbb H^n$ is similar to the role of Hilbert transform on the real line.

We note that to study the boundedness and compactness of Riesz transform commutator, one only needs the upper bound of the Riesz transform kernel and its derivative (see \cite{FS},\cite{Sa}). However, a full characterisation of the Riesz transform commutator would also require the kernel lower bound. 
Recently, in \cite{DLLW,DLLWW} the authors studied the pointwise lower bound of the kernel of Riesz transform
associated with the sub-Laplacian on stratified Lie groups, and then they established the characterisation of commutator of  Riesz transforms with respect to the BMO space (see Theorem 1.2 in \cite{DLLW} and Theorems 1.2--1.5 in \cite{DLLWW}), which extends the classical result of Coifman--Rochberg--Weiss \cite{CRW} to the setting of stratified Lie groups. One of the important examples of stratified Lie groups is the Heisenberg group $\mathbb H^n$.   And the pointwise kernel lower bound obtained in \cite{DLLWW} is as follows. 

\smallskip
\noindent {\bf Theorem A\ }(\cite{DLLWW}, Theorem 1.1){\bf.} {\it
Suppose that $\mathcal G$ is a stratified Lie group with homogeneous dimension $Q$ and that $j\in\{1,2,\ldots,n\}$.  
There exist a large positive constant $r_o$ and a positive constant $C$  such that for every 
$g\in \mathcal G$ there exists a ``twisted truncated sector'' $G\subset \mathcal G$ such that $\inf\limits_{g'\in G}\rho(g,g')=r_o$ and that  for every $g_1\in B_\rho(g,1)$ and $g_2\in G$, we have
\begin{align*}
 |K_j(g_1,g_2)|\geq C  \rho(g_1,g_2)^{-Q}, \quad |K_j(g_2,g_1)|\geq C  \rho(g_1,g_2)^{-Q},
\end{align*}
and all $K_j(g_1,g_2)$ as well as all $K_j(g_2,g_1)$ have the same sign.

Moreover, this ``twisted truncated sector''  $G$ is regular, in the sense that  $|G|=\infty$ and that for any $R>2r_o$,
\begin{align}\label{regular0}
 |B_\rho(g,R) \cap G|\approx R^Q,  
\end{align}
where the implicit constants are independent of $g$ and $R$. 
}

 Here and in what follows, $\rho$ is the homogeneous norm on $\mathcal G$, and for $g\in\mathcal G$, $r>0$, $B_\rho(g,r)$ is the ball defined via $\rho$, and $K_j(g_1,g_2)$ is the kernel of the $j$th Riesz transform $R_j$. For the details of the notation, we refer to Section 2 below.

The aim of this paper is to establish the characterisation of the compactness of the commutator of Riesz transform associated with sub-Laplacian on stratified Lie groups via 
{the VMO space, which is defined as the closure of the $C_0^\infty$ functions (functions with arbitrary order of derivatives and with compact support) under the norm of the BMO space.} 
For the precise definition of $C_0^\infty$ functions, the properties of Riesz transforms and the BMO, Hardy spaces on stratified Lie groups, we refer to Folland--Stein \cite{FS}, see also Saloff-Coste \cite{Sa}. See Theorem \ref{thm-cmo} for the equivalent characterisation of VMO as the space of functions with vanishing mean oscillation.

However, 
to establish the characterisation of compactness of Riesz commutators, 
 we point out that the condition \eqref{regular0} for the ``twisted truncated sector'' $G\subset \mathcal G$ related to the pointwise lower bound of the kernel of $R_j$ is not enough, since we need to know more about the behaviour of this twisted truncated sector $G$ in each annuli that intersects with $G$.

Thus, the main results of this paper are twofold. First, we give a particular construction of the ``twisted truncated sector'' $G\subset \mathcal G$ related to the pointwise lower bound of the kernel of $R_j$, which is regular in each annuli that intersects with $G$, while the previous version in Theorem A only states that the ``twisted truncated sector'' is regular in each large ball that intersects with $G$.
Second, by using this kernel lower bound and the more explicit information on $G$, we establish the characterisation of the compactness of Riesz commutators via functions in VMO space on $\mathcal G$, where a characterisation of the VMO space is needed.

To be more precise, we have the following results.

 \begin{theorem}\label{thm0}
 Suppose that $\mathcal G$ is a stratified  Lie group with homogeneous dimension $Q$ and that $j\in\{1,2,\ldots,n\}$.  
There exist a large positive constant $r_o$ and a positive constant $C$ such that for every 
$g\in \mathcal G$ there exists a ``twisted truncated sector'' $G_g\subset \mathcal G$ satisfying that  $\inf\limits_{g'\in G_g}\rho(g,g')=r_o$ and that  for every $g_1\in B_\rho(g,1)$ and $g_2\in G_g$, we have
$$ |K_j(g_1,g_2)|\geq C \rho(g_1,g_2)^{-Q}, \quad |K_j(g_2,g_1)|\geq C \rho(g_1,g_2)^{-Q},$$
and all $K_j(g_1,g_2)$ as well as all $K_j(g_2,g_1)$ have the same sign.

Moreover, this ``twisted truncated sector''  $G_g$ is regular, in the sense that  $|G_g|=\infty$ and that for any $R_2>R_1>2r_o$,
\begin{align}\label{regular}
 \left|\left(B_\rho(g,R_2)\setminus B_\rho(g,R_1)\right) \cap G_g\right|\approx  \left|B_\rho(g,R_2)\setminus B_\rho(g,R_1)\right|,  
 \end{align}
where the implicit constants are independent of $g$ and $R_1, R_2$. 

\end{theorem}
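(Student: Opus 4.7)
The plan is to build $G_g$ as a dilation-invariant cone beyond its truncation radius, which will automatically give the stronger annular regularity \eqref{regular} and not just the weaker ball regularity \eqref{regular0}. Since $R_j$ is a left-invariant convolution operator on $\mathcal G$, I may write $K_j(g_1, g_2) = K(g_1^{-1} g_2)$ where $K$ is smooth on $\mathcal G \setminus \{e\}$ and homogeneous of degree $-Q$, i.e.\ $K(\delta_t(u)) = t^{-Q}K(u)$. The left-invariance of $\rho$ and translation-invariance of Haar measure reduce everything to the case $g = e$; once $G_e$ is constructed I simply set $G_g := g \cdot G_e$. The analysis underlying Theorem A in \cite{DLLWW} identifies an open subset $\Omega$ of the unit sphere $S = \{u : \rho(u) = 1\}$ on which $K$ has a definite sign and satisfies $|K(u)| \geq c_0 > 0$; I would take $\Omega$ to be any precompact open subset of this form and define
$$G_e := \{\delta_t(u) : t \geq r_o, \ u \in \Omega\},$$
with $r_o$ a large constant to be fixed. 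By construction $\inf_{g'\in G_e}\rho(e,g') = r_o$.

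The kernel lower bound on $G_e$ then follows in two steps. First, for $g_2 = \delta_t(u) \in G_e$ with $t = \rho(g_2) \geq r_o$, homogeneity gives $|K(g_2)| = t^{-Q}|K(u)| \geq c_0\,\rho(g_2)^{-Q}$ with correct sign. Second, for general $g_1 \in B_\rho(e, 1)$, I would invoke the Folland--Stein gradient bound $|K(g_1^{-1}g_2) - K(g_2)| \lesssim \rho(e, g_1)\,\rho(e, g_2)^{-Q-1}$, valid whenever $\rho(e, g_2) \geq 2\rho(e, g_1)$. As $t \geq r_o \gg 1$, this perturbation is of order $t^{-Q-1}$, which is a small fraction of the main term $c_0 t^{-Q}$ once $r_o$ is chosen large enough in terms of $c_0$ and the implicit constants. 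This preserves both the magnitude and the sign of $K_j(g_1, g_2)$; the symmetric inequality for $K_j(g_2, g_1) = K(g_2^{-1}g_1)$ follows by the same argument.

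The annular regularity is then a direct consequence of homogeneous polar coordinates. The Haar measure decomposes as $dg = t^{Q-1}\,dt\,d\sigma(u)$ where $g = \delta_t(u)$ and $\sigma$ is the induced surface measure on $S$. Hence for $R_2 > R_1 > 2r_o$,
$$\bigl|(B_\rho(e, R_2) \setminus B_\rho(e, R_1)) \cap G_e\bigr| = \sigma(\Omega)\int_{R_1}^{R_2} t^{Q-1}\,dt,$$
while $|B_\rho(e, R_2) \setminus B_\rho(e, R_1)| = \sigma(S)\int_{R_1}^{R_2} t^{Q-1}\,dt$. The ratio is the positive constant $\sigma(\Omega)/\sigma(S)$, independent of $R_1$ and $R_2$, giving \eqref{regular}; left translation transfers everything to arbitrary $g$. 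The hard part will be ensuring the non-vanishing step that singles out the angular region $\Omega$ with a quantitative sign and lower bound for $K$: on general stratified groups $K$ has no closed form, and this is essentially the content of Theorem A, so I would leverage those arguments directly rather than reprove them. With the cone structure in hand, the upgrade from ball to annular regularity is automatic from the dilation invariance of $G_e$.
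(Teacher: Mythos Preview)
Your overall strategy matches the paper's: both construct $G_g$ as the left $g$-translate of a truncated dilation cone over a spherical cap $\Omega\subset S_\rho(0,1)$ on which $K_j$ has a fixed sign, and both read off the annular regularity from the polar decomposition of Haar measure, obtaining the constant ratio $\sigma(\Omega)/\sigma(S)$. The paper assembles the cone as a disjoint union of thin shells $E_l$ (with $r_l^s=r_1^s+2(l-1)$, so consecutive shells abut and their union is exactly the cone beyond radius $r_o$) and checks the kernel bound one shell at a time; you write the cone down in one stroke. The resulting set $G_g$ and the measure computation are identical.

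The genuine methodological difference is in how the lower bound survives the perturbation $g_1\in B_\rho(e,1)$. The paper covers the compact $\Omega$ by finitely many small $\rho$-balls on which $K_j$ retains its sign and size, and then verifies algebraically (via \eqref{dilation}) that the rescaled argument $(g_2')^{-1}\circ\tilde g\circ g_1'$ lands in one of these balls. You instead invoke the Calder\'on--Zygmund smoothness bound from \eqref{kj} to control $|K(g_1^{-1}g_2)-K(g_2)|\lesssim t^{-Q-1}$ and absorb it into the main term $c_0\,t^{-Q}$ once $t\ge r_o$ is large. Your route is shorter and makes the role of $r_o$ quantitative; the paper's stays at the level of continuity and compactness and avoids appealing to the gradient estimate.

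One point should not be dismissed as ``the same argument'': the bound for $K_j(g_2,g_1)=K(g_2^{-1}g_1)$. Here your perturbation anchors at $K(g_2^{-1})=t^{-Q}K(u^{-1})$, so you also need $|K|$ bounded below with a fixed sign on the reflected cap $-\Omega$. That is an additional constraint on $\Omega$, not an automatic consequence of $K|_\Omega\ne0$. You should choose $\Omega$ as a small neighborhood of a point $u_0$ at which both $K_j(u_0)\ne0$ \emph{and} $K_j(-u_0)\ne0$, and say a word about why such a point exists; simply citing Theorem~A does not deliver this.
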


Here we point out that the set $G_g$ that we constructed in Theorem \ref{thm0} above 
is a connected open set spreading out to infinity, which plays the role of 
the ``truncated sector centred at a fixed point'' in the Euclidean setting. 
The shape of $G_g$ here may not be the same as the usual sector since the norm $\rho$ 
on $\mathcal G$ is different from the standard Euclidean metric. However, such a kind of twisted sector always exists.

Second, based on the property of the Riesz transform kernel, we establish the following commutator theorem on stratified Lie group via providing the characterisation of the VMO space, following the approach of Uchiyama \cite{U1}. In what follows we use $A_p(\mathcal G)$ to denote the Muckenhoupt type weighted class on stratified Lie groups, whose precise definition will be given in Section 2.
\begin{theorem}\label{thm1}
Let $1<p<\infty$, $w\in A_p(\mathcal G)$, $b\in L^1_{loc}(\mathcal G)$. Then $b\in {\rm VMO}(\mathcal G)$ if and only if for some $\ell\in\{1,\cdots, n\}$, Riesz transform commutator $[b, \mathcal R_{\ell}]$ is compact on $L_w^{p}(\mathcal G)$.
\end{theorem}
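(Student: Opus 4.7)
The plan is to prove Theorem \ref{thm1} by handling the two directions separately, relying on a characterisation of $\mathrm{VMO}(\mathcal G)$ analogous to the classical Euclidean one: a BMO function $b$ lies in $\mathrm{VMO}(\mathcal G)$ if and only if its mean oscillation over $B_\rho(g,r)$ vanishes as $r\to 0$, as $r\to\infty$, and as $\rho(g,e)\to\infty$ uniformly for balls of a fixed radius. This equivalence (Theorem \ref{thm-cmo}) will serve as the bridge between the analytic compactness condition and the geometric oscillation conditions that can be tested against the Riesz kernel.

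For the sufficiency direction, I would first verify compactness on $L^p_w(\mathcal G)$ when $b\in C_0^\infty(\mathcal G)$. This reduces to approximating $[b,R_\ell]$ in operator norm by finite rank or Hilbert--Schmidt-type operators: truncate the singular kernel by a cut-off in $\rho(g_1,g_2)\geq\varepsilon$, use the smoothness of $b$ (so that $|b(g_1)-b(g_2)|\lesssim\rho(g_1,g_2)$) to handle the inner part, and apply the stratified-group version of the Fr\'echet--Kolmogorov compactness criterion in $L^p_w$. Once compactness is secured for $C_0^\infty$ symbols, the weighted boundedness $\|[b,R_\ell]f\|_{L^p_w}\lesssim \|b\|_{\mathrm{BMO}}\|f\|_{L^p_w}$ (which follows from the upper bounds on the Riesz kernel together with standard Calder\'on--Zygmund theory adapted to $\mathcal G$, as in \cite{FS, Sa, DLLW}) combined with the density of $C_0^\infty$ in $\mathrm{VMO}$ under the BMO norm promotes compactness to every $b\in\mathrm{VMO}(\mathcal G)$.

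For the necessity direction, which is the core of the argument and where I expect to spend the bulk of the work, I would follow the scheme pioneered by Uchiyama \cite{U1} and argue by contradiction. If $b\notin\mathrm{VMO}$, the three-condition characterisation supplies a $\delta>0$ and a sequence of balls $B_k=B_\rho(g_k,r_k)$ with
\[
\frac{1}{|B_k|}\int_{B_k}|b-b_{B_k}|\,d\mu\;\geq\;\delta,
\]
and with $r_k$ or $\rho(g_k,e)$ escaping to $0$ or $\infty$. Using left translations $L_{g_k}$ and the natural dilations $\delta_{r_k}$ on $\mathcal G$, together with the corresponding invariance of $R_\ell$ (up to normalisation), I would reduce to the situation where each $B_k$ is a unit ball centred at $g_k$. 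For each $k$ I would then build a test function $f_k$ supported on the twisted truncated sector $G_{g_k}$ of Theorem \ref{thm0}, of the form
\[
f_k\;=\;\alpha_k\bigl(\chi_{E_k^+}-\chi_{E_k^-}\bigr),
\]
where $E_k^\pm\subset G_{g_k}\cap\bigl(B_\rho(g_k,R)\setminus B_\rho(g_k,r_o)\bigr)$ are chosen to split $G_{g_k}$ into pieces of comparable measure on which $b-b_{B_k}$ exceeds $\pm c\delta$, and $\alpha_k$ normalises $\|f_k\|_{L^p_w}=1$. The annular regularity \eqref{regular}, applied to every dyadic annulus intersecting $G_{g_k}$, is precisely what makes such balanced splitting possible and lets one transfer the lower bound on the BMO oscillation on $B_k$ to a lower bound on the oscillation on $G_{g_k}$.

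Invoking the uniform sign and the pointwise lower bound $|K_\ell(g_1,g_2)|\geq C\rho(g_1,g_2)^{-Q}$ of Theorem \ref{thm0} for $g_1\in B_\rho(g_k,1)$ and $g_2\in G_{g_k}$, the kernel $K_\ell(g_1,\cdot)$ does not oscillate in sign on $G_{g_k}$, so cancellation cannot occur and one obtains
\[
|[b,R_\ell]f_k(g)|\;\geq\;c\,\delta\qquad\text{for $g$ in a set of positive $w$-measure inside } B_\rho(g_k,1),
\]
uniformly in $k$. Passing to a subsequence whose translates/dilates have pairwise essentially disjoint image supports and reversing the normalising transformations, one produces a bounded sequence $\{f_k\}\subset L^p_w(\mathcal G)$ such that $\|[b,R_\ell]f_k-[b,R_\ell]f_j\|_{L^p_w}\geq c'\delta$ for all $k\neq j$, contradicting the assumed compactness of $[b,R_\ell]$.

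The main obstacle is the lower-bound step: implementing Uchiyama's $\pm$ splitting inside a genuinely non-Euclidean geometric object like $G_{g_k}$ while retaining enough quantitative control to produce the uniform lower bound on $[b,R_\ell]f_k$, and doing this in the weighted $L^p_w$ setting (which requires $A_p$ self-improvement and careful comparison between $|\cdot|$-measure and $w$-measure on the pieces $E_k^\pm$). This is exactly where the strengthening from the global regularity \eqref{regular0} of Theorem~A to the annular regularity \eqref{regular} of Theorem \ref{thm0} becomes indispensable.
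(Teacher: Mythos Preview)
Your sufficiency direction is essentially the paper's argument: approximate $b$ by $b_\varepsilon\in C_0^\infty(\mathcal G)$ in BMO, use the weighted commutator bound $\|[b-b_\varepsilon,\mathcal R_\ell]\|_{L^p_w\to L^p_w}\lesssim\|b-b_\varepsilon\|_{\mathrm{BMO}}$, and verify the Fr\'echet--Kolmogorov conditions for $b_\varepsilon$ smooth.

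In the necessity direction, however, you have the roles of the ball and the twisted sector reversed, and this is a genuine gap. You propose to support the test function $f_k$ on the sector $G_{g_k}$ and split $G_{g_k}$ into sets $E_k^\pm$ on which $b-b_{B_k}$ exceeds $\pm c\delta$. But the hypothesis $M(b,B_k)>\delta$ only controls the oscillation of $b$ on $B_k$; it says nothing whatsoever about the behaviour of $b$ on $G_{g_k}$, which lies at distance at least $r_o$ from $B_k$. There is no mechanism to ``transfer'' the oscillation from $B_k$ to $G_{g_k}$, and the annular regularity \eqref{regular} does not help here: it is a statement about Lebesgue measure of $G_{g_k}$, not about the distribution of $b$ on it.

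The paper places the test function on the ball, where the oscillation is known. Concretely, one takes the median value $m_b(B_j)$, splits $B_j$ into $E_{j1}=\{b\geq m_b(B_j)\}$ and $E_{j2}=\{b\leq m_b(B_j)\}$ of equal measure, and sets $f_j=w(B_j)^{-1/p}(\chi_{E_{j1}}-\chi_{E_{j2}})$. Then $f_j(g')(b(g')-m_b(B_j))\geq 0$ on $B_j$ and $\int f_j=0$. One evaluates $[b,\mathcal R_\ell]f_j$ on the sector: for $g\in G_{g_j}$, the kernel $K_\ell(g,\cdot)$ has fixed sign on $B_j$, so
\[
\bigl|\mathcal R_\ell\bigl((b-m_b(B_j))f_j\bigr)(g)\bigr|
=\int_{B_j}|K_\ell(g,g')|\,|b(g')-m_b(B_j)|\,|f_j(g')|\,dg'
\gtrsim\frac{\delta\,|B_j|}{w(B_j)^{1/p}d(g,g_j)^{Q}},
\]
while the complementary piece $(b-m_b(B_j))\mathcal R_\ell f_j$ is small thanks to the mean-zero cancellation of $f_j$ and kernel smoothness. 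The annular regularity \eqref{regular} then enters to show that $w\bigl((2^{k+1}B_j\setminus 2^kB_j)\cap G_{g_j}\bigr)$ is comparable to $w(2^{k+1}B_j)$ for each dyadic $k$, which is what yields the quantitative lower bound on $\|[b,\mathcal R_\ell]f_j\|_{L^p_w}$ over annuli and ultimately the separation $\|[b,\mathcal R_\ell]f_{j_i}-[b,\mathcal R_\ell]f_{j_{i+m}}\|_{L^p_w}\geq c(\delta)$. So the sector is where you \emph{integrate the output}, not where you build the input.
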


This paper is organised as follows. In Section 2 we recall necessary preliminaries on stratified nilpotent Lie groups $\mathcal G$. In Section 3 we provide a particular construction of the twisted truncated sector and then obtain the pointwise lower bound of the Riesz transform kernels, and then prove Theorem \ref{thm0}. In Section 4, by using the kernel lower bound that
we established, we prove Theorem \ref{thm1}, the characterisation of compactness of the Riesz commutator. 
In the end, in the appendix, we provide the characterisation of the VMO space following the approach of Uchiyama \cite{U1}.

\vspace{0.2cm} 

{

{\bf Notation:} Throughout this paper, $\mathbb{N}$ will denote the set of all nonnegative integers. For a real number $a$, $[a]$ means the largest integer no greater than $a$.
In what follows, $C$ will denote positive constant which is independent of the main parameters, but it may vary from line to line. By $f\lesssim g$,
we shall mean $f\le C g$ for some positive constant 
 $C$. If $f\lesssim g$ and $g\lesssim f$, we then write $f \approx g$.
}



\section{Preliminaries on stratified Lie groups $\mathcal G$}
\setcounter{equation}{0}

Recall that a connected, simply connected nilpotent Lie group $\mathcal{G}$ is said to be stratified if its left-invariant Lie algebra $\mathfrak{g}$ (assumed real and of finite dimension) admits a direct sum decomposition
\begin{align*}
\mathfrak{g} = \bigoplus_{i = 1}^s V_i , \quad [V_1, V_i] = V_{i + 1}, \  \mbox{ for $i \leq s - 1$ and $[V_1, V_s]=0$.}
\end{align*}
$s$ is called the step of the group $\mathcal G$.

For $i=1,\cdots,s$, let $n_i=\dim V_i$ and $m_i= n_1+\cdots+n_i$, $m_0=0$ and $m_s=N$.We choose once and for all a basis $\{X_1,\cdots, X_N\}$ for $\mathfrak{g}$ adappted to the stratification, that is, such that
$\{X_{m_{j-1}+1},\cdots, X_{m_j}\}$ is a basis of $V_j$ for each $j=1,\cdots,s$. 
One identifies $\mathfrak{g}$ and $\mathcal{G}$ via the exponential map
\begin{align*}
\exp: \mathfrak{g} \longrightarrow \mathcal{G},
\end{align*}
which is a diffeomorphism, {i.e., any $g\in\mathcal{G}$ can be written in a unique way as $g=\exp(x_1X_1+\cdots+x_NX_N)$. Using these exponential coordinates, we identify $g$ with the $N$-tuple $(x_1,\cdots,x_N)\in\mathbb{R}^N$ and identify $\mathcal G$ with $(\mathbb{R}^N, \circ)$, where the group operation $\circ$ is determined by the Campbell-Hausdorff formula (c.f. \cite[Section 2.2.2]{BLU})}.

We fix once and for all a (bi-invariant) Haar measure $dx$ on $\mathcal G$ (which is just the lift of Lebesgue measure on $\frak g$ via $\exp$).

There is a natural family of dilations on $\frak g$ defined for $r > 0$ as follows:
\begin{align*}
\delta_r \bigg( \sum_{i = 1}^s v_i \bigg) = \sum_{i = 1}^s r^i v_i, \quad \mbox{with $v_i \in V_i$}.
\end{align*}
This allows the definition of dilation on $\mathcal{G}$, which we still denote by $\delta_r$ (see Section \ref{S3}).

Denote by $n=n_1$, for the basis $\{\X_1, \cdots, \X_n\}$ of $V_1$, we consider the sub-Laplacian $\Delta = \sum_{j=1 }^n \X_j^2 $. Observe that $\X_j$ ($1 \leq j \leq n$) is homogeneous of degree $1$ with respect to the dilations, and $\Delta$ of degree $2$ in the sense that :
\begin{align*}
&\X_j \left( f \circ \delta_r \right) = r \, \left( \X_j f \right) \circ \delta_r, \qquad  1 \leq j \leq  n, \  r > 0, \ f \in C^1, \\
&\delta_{\frac{1}{r}} \circ \Delta \circ \delta_r = r^2 \, \Delta, \quad \forall r > 0.
\end{align*}

Let $Q$ denote the homogeneous dimension of $\mathcal{G}$, namely,
\begin{align}\label{homo dimension}
Q= \sum_{i=1}^s i\, {\rm dim} V_i.
\end{align}
And let $p_h$ ($h > 0$) be the heat kernel (that is, the integral kernel of $e^{h \Delta}$)
on $\mathcal G$. For convenience, we set $p_h(g) = p_h(g, 0)$ (that is, in this note, for a convolution operator, we will identify the integral kernel with the convolution kernel) and $p(g) = p_1(g)$.

Recall that (c.f. for example \cite{FS})
\begin{align} \label{hkp1}
p_h(g) = h^{-\frac{Q}{2}} p(\delta_{\frac{1}{\sqrt{h}}}(g)), \qquad  \forall h > 0, \  g \in \mathcal G.
\end{align}

The kernel of the $j^{\mathrm{th}}$  Riesz transform $\X_j (-\Delta)^{-\frac{1}{2}}$ ($1 \leq j \leq  n$) is written simply as $K_j(g, g') = K_j(g'^{-1} \circ g)$. It is well-known that
\begin{align} \label{kjs}
K_j \in C^{\infty}(\mathcal G \setminus \{0\}), \ K_j(\delta_r(g)) = r^{-Q} K_j(g), \quad \forall g \neq 0, \ r > 0, \ 1 \leq j \leq  n,
\end{align}
which also can be explained by \eqref{hkp1} and the fact that
\begin{align*}
K_j(g) = \frac{1}{\sqrt{\pi}} \int_0^{+\infty} h^{-\frac{1}{2}} \X_j p_h(g) \, dh  = \frac{1}{\sqrt{\pi}} \int_0^{+\infty} h^{- \frac{Q}{2} - 1} \left( \X_j p \right)(\delta_{\frac{1}{\sqrt{h}}}(g)) \, dh.
\end{align*}

Next we recall the homogeneous norm $\rho$ (see for example \cite{FS}) on $\mathcal G$
which is defined to be a continuous function
$g\to \rho(g)$ from $\mathcal G$ to $[0,\infty)$, which is $C^\infty$ on $\mathcal G\backslash \{0\}$
%
and satisfies
\begin{enumerate}
\item[(a)] $\rho(g^{-1}) =\rho(g)$;
\item[(b)] $\rho({ \delta_r(g)}) =r\rho(g)$ for all $g\in \mathcal G$ and $r>0$;
\item[(c)] $\rho(g) =0$ if and only if $g=0$.
\end{enumerate}
For the existence (also the construction) of the homogeneous norm $\rho$ on  $\mathcal G$,
we refer to  \cite[Chapter 1, Section A]{FS}. { For convenience,
we set
\begin{align*}
\rho(g, g') = \rho(g'^{-1} \circ g) = \rho(g^{-1} \circ g'), \quad \forall g, g' \in \mathcal{G}.
\end{align*}
Recall that (see \cite{FS}) this defines a quasi-distance in sense of  Coifman-Weiss, namely, there exists a constant $C_\rho > 0$ such that
 \begin{align} \label{qdr}
 \rho(g_1, g_2) \leq C_{\rho} \, \left( \rho(g_1, g') + \rho(g', g_2)   \right), \qquad \forall g_1, g_2, g' \in  \mathcal{G}.
\end{align}
In the sequel, we fix a homogeneous norm $\rho$ on  $\mathcal G$ (see Section \ref{S3}).

We now denote by $d$ the Carnot--Carath\'eodory metric associated to $\{\X_j\}_{1\leq j\leq n}$, which is equivalent to $\rho$ in the sense that: there exist $C_{d_1},C_{d_2}>0$ such that
for every $g_1,g_2\in\mathcal G$ (see \cite{BLU}),
\begin{align} \label{equi metric d}
 C_{d_1}\rho(g_1,g_2)\leq d(g_1,g_2)\leq C_{d_2} \rho(g_1,g_2).
\end{align}
We point out that {the Carnot--Carath\'eodory metric $d$ even on the most special stratified Lie group, the Heisenberg group, is not smooth on $\mathcal G \setminus \{0\}$.}

In the sequel, to avoid confusion we will use $B_\rho(g,r)$ and $S_\rho(g,r)$  to denote the open ball and the sphere with center $g$ and radius $r$ defined by $\rho$, respectively.  And we will use $B(g,r)$ and $S(g,r)$ to denote the open ball and the sphere defined by $d$, respectively.  In the following, $B$ is always a ball defined by $d$ and $r_B$ is its radius.  For any $\alpha>0$, denote by $\alpha B(g,r)=B(g,\alpha r)$.

\begin{definition}
The bounded mean oscillation space $\operatorname{BMO}(\mathcal G)$ is defined to be the space of all locally integrable functions $f$ on $\mathcal G$ such that
$$\|f\|_{{\rm BMO}(\mathcal G)}:=\sup_{B\subset \mathcal G}M(f,B):=\sup_{B\subset \mathcal G}{1\over |B|}\int_{B}\left|f(g)-f_B\right|dg<\infty,$$
where 
\begin{align}\label{fave}
f_B={1\over |B|}\int_{B}f(g)dg.
\end{align}
\end{definition}
\begin{definition}
We define ${\rm VMO}(\mathcal G)$ as the closure of the $C_0^\infty$ functions on $\mathcal G$
 under the norm of the BMO space.
\end{definition}


\color{black}
For the Folland--Stein BMO space  ${\rm BMO}(\mathcal G)$,  note that we have an {equivalent norm}, 
which is 
defined by
 $$\|b\|'_{\operatorname{BMO}(\mathcal G)}=\sup_{B\subset\mathcal G}\inf_{c}\frac{1}{|B}|\int_{B}|b(g)-c|dg.$$
 {For a ball B}, the infimum above is attained and the constants where this happens can be found among the median values.
 
 \begin{definition}\label{medianv}
 A {median value} of $b$ over a ball $B$ will be any real number $m_b(B)$ that satisfies simultaneously
 $$\left|\{x\in B: b(g)>m_b(B)\}\right|\leq \frac{1}{2}|B|$$
 and
 $$\left|\{x\in B: b(g)<m_b(B)\}\right|\leq \frac{1}{2}|B|.$$
 \end{definition}
Following the standard proof in \cite[p.199]{To}, we can see that the constant $c$ in the definition of $\|b\|'_{\operatorname{BMO}(\mathcal G)}$ can be chosen to be a median value of $b$. And it is easy to see that for any ball $B\subset \mathcal G$, 
\begin{align}\label{mmedian}
M(b,B)\approx {1\over |B|}\int_{B}\left|f(g)-m_b(B)\right|dg,
\end{align}
where the implicit constants are independent of the function $b$ and the ball $B$. 
 
The theory of $A_{p}$ weight was first introduced by Muckenhoupt in the study of
weighted $L^{p}$ boundedness of Hardy-Littlewood maximal functions in \cite%
{Mu}. For $A_p$ weights on the stratified Lie group (which is an example of spaces of homogeneous type in the sense of Coifman and Weiss \cite{CW}) one can refer to \cite{HPR}. 
By a weight, we mean a non-negative locally integrable function on $\mathcal G$.

\begin{definition}
\ Let $1<p<\infty $, a  weight $w$ is said to be of class  $A_{p}(\mathcal G)$ if
\begin{equation*}
[w]_{A_p}:=\sup_{B\subset\mathcal G}\left( \frac{1}{|B|}\int_{B}w(g)dg\right) \left( \frac{1}{|B|}%
\int_{B}w(g) ^{-1/(p-1)}dg\right) ^{p-1}<\infty.
\end{equation*}%
A weight $w$ is said to be of class  $A_{1}(\mathcal G)$ if there exists a constant $C$
such that for all balls $B\subset\mathcal G$,
\begin{equation*}
\frac{1}{|B|}\int_{B}w( g) dg\leq C\mathop{\rm essinf}%
\limits_{x\in B}w(g) .
\end{equation*}%
For $p=\infty$, we define
\begin{equation*}
A_{\infty }(\mathcal G)= \bigcup_{1\leq p<\infty }A_{p}(\mathcal G).
\end{equation*}
\end{definition}
Note that $w$ is doubling when it is in $A_p$, i.e. there exists a positive constant $C$ such that $w(2B)\leq C w(B)$ for every ball $B$.

Recall that the Muckenhoupt weights have some fundamental properties. 
A close relation to $A_{\infty }(\mathcal G)$ is the reverse H\"{o}lder
condition. If there exist $r>1$ and a fixed constant $C$ such that
\begin{equation*}
\left( \frac{1}{|B|}\int_{B}w(g)^{r}dg\right) ^{1/r}\leq \frac{C}{|B|}%
\int_{B}w(g)dg
\end{equation*}%
for all balls $B\subset \mathcal G$, we then say that $w$ satisfies the
\textit{reverse H\"{o}lder condition of order $r$} and write $w\in RH_{r}(%
\mathcal G)$. According to \cite[Theorem 19 and Corollary 21]{IMS}, $ w\in A_{\infty }(\mathcal G)$ if and only if there exists some $r>1$ such
that $w\in RH_{r}(\mathcal G)$. 

For any $w\in A_{\infty }(\mathcal G)$ and any Lebesgue
measurable set $E$, denote by $w(E):=\int_{E}w(g)dg$. 
By the definition of $A_p$ weight and H\"older's inequality, we can easily obtain the  following
standard properties.
\begin{lemma}\label{lemlw}
 Let $w\in A_{p}(\mathcal G)\cap RH_{r}(\mathcal G), p\geq1$ and $r>1$. Then there
exist constants $C_{1}, C_{2}>0$ such that
\begin{equation*}
C_{1}\left(\frac{|E|}{|B|}\right)^{p}\leq\frac{w(E)}{w\left(B\right)}\leq
C_{2}\left(\frac{|E|}{|B|}\right)^{(r-1)/r}
\end{equation*}
for any measurable subset $E$ of a ball $B$. Especially, for any $\lambda >1$,
\begin{equation*}
w\left( B\left( g_{0},\lambda R\right) \right) \leq C\lambda ^{Qp}w\left(
B\left( g_{0},R\right) \right),
\end{equation*}
where $Q$ is the homogeneous dimension of $\mathcal G$.
\end{lemma}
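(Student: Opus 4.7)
The plan is to split the two-sided estimate into two independent inequalities, handled by the $A_p$ and $RH_r$ conditions respectively, and then to read off the doubling-type statement as a corollary of the lower bound. Nothing about the stratified structure of $\mathcal G$ enters beyond the scaling identity $|B(g_0,\lambda R)|=\lambda^Q|B(g_0,R)|$, so the argument follows the classical Muckenhoupt template.

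For the lower inequality $C_1(|E|/|B|)^p\le w(E)/w(B)$, the natural move when $p>1$ is to apply H\"older's inequality to the trivial identity $|E|=\int_E w^{1/p}\,w^{-1/p}\,dg$ with conjugate exponents $p$ and $p/(p-1)$, obtaining an upper bound for $|E|^p$ in terms of $w(E)$ and $\int_E w^{-1/(p-1)}$. Enlarging the second integral to $B$ and then invoking the $A_p$ hypothesis to convert $\bigl(\int_B w^{-1/(p-1)}\bigr)^{p-1}$ into $[w]_{A_p}|B|^p/w(B)$ yields the claim with $C_1=1/[w]_{A_p}$. The case $p=1$ is even easier: the pointwise $A_1$ bound $w(g)\ge c\,w(B)/|B|$ on $B$ integrates immediately over $E$.

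For the upper inequality $w(E)/w(B)\le C_2(|E|/|B|)^{(r-1)/r}$, I would apply H\"older with the conjugate pair $r/(r-1)$ and $r$ to $w(E)=\int_E w\,dg$, which gives $w(E)\le |E|^{(r-1)/r}\bigl(\int_E w^r\bigr)^{1/r}$. Enlarging the domain of integration to $B$ and applying the reverse H\"older condition $\bigl(|B|^{-1}\int_B w^r\bigr)^{1/r}\le C\,w(B)/|B|$ converts the last factor into a constant multiple of $|B|^{1/r-1}w(B)$, and rearrangement produces the desired bound.

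For the doubling-type inequality, I would specialise the already-proved lower bound to $E=B(g_0,R)\subset B=B(g_0,\lambda R)$ and use the homogeneity $|B(g_0,\lambda R)|=\lambda^Q|B(g_0,R)|$, which follows from the Haar measure being the push-forward of Lebesgue measure under $\exp$ together with $\delta_\lambda$ being homogeneous of degree $Q$ in the sense of \eqref{homo dimension}. Solving for $w(B(g_0,\lambda R))$ gives the stated $\lambda^{Qp}$ growth. I do not anticipate a genuine obstacle: the argument is purely measure-theoretic, uses only H\"older's inequality and the definitions of $A_p$ and $RH_r$, and the one point requiring care is the book-keeping of exponents so that the $A_p$ and $RH_r$ constants combine into $C_1$ and $C_2$ correctly.
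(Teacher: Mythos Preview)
Your proposal is correct and matches the paper's own approach: the paper does not spell out a proof, stating only that the lemma follows ``by the definition of $A_p$ weight and H\"older's inequality,'' which is precisely the route you take. Your exponent bookkeeping is accurate, and the deduction of the doubling estimate from the lower bound via $|B(g_0,\lambda R)|=\lambda^Q|B(g_0,R)|$ is the intended reading of the ``Especially'' clause.
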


\section{Lower bound for kernel of Riesz transform $\mathcal R_j:= \X_j (-\Delta)^{-\frac{1}{2}}$ and\\ proofs of Theorems \ref{thm0}}\label{S3}
\setcounter{equation}{0}

In this section, we study a suitable version of the lower bound for kernel of Riesz transform $\mathcal R_j:= \X_j (-\Delta)^{-\frac{1}{2}}$, $j=1,\ldots,n$, on stratified Lie group $\mathcal G$. Here we will use the Carnot--{Carath\'eodory} metric $d$ associated to $\{\X_j\}_{1\leq j\leq n}$ to study the lower bound, and we also  make good use of the dilation structure on $\mathcal G$. It is not clear whether one can obtain similar lower bounds for the Riesz kernel on general nilpotent Lie groups which is not stratified.

{To begin with, we first recall that
by the classical estimates for heat kernel and its derivations on stratified Lie groups {(see for example \cite{Sa,VSC92})}, it is well-known that for any $1 \leq j \leq n$ and $g\neq g'$
\begin{align} \label{kj}
|K_j(g, g')| + d(g, g') \sum_{i = 1}^n \left( |\X_{i, g} K_j(g, g')| + |\X_{i, g'} K_j(g, g')| \right)
\lesssim  d(g, g')^{-Q}, 
\end{align}
where $\X_{i, g}$ denotes the derivation with respect to $g$.

Two important families of diffeomorphisms of $\mathcal G$ are the translations and dilations of $\mathcal G$. For any $g\in \mathcal G$, the (left) translation $\tau_g : \mathcal{G}\rightarrow \mathcal{G}$ is defined as
$$\tau_g (g')=g\circ g'.$$
 For any $\lambda>0$, the dilation $\delta_{\lambda}:\mathcal{G}\rightarrow \mathcal{G}$, is defined as
\begin{equation}\label{delta}
 \delta_{\lambda}(g)=\delta_{\lambda}\left(x_1,x_2,\cdots,x_N\right)=\left(\lambda^{\alpha_1} x_1,\lambda^{\alpha_2} x_2,\cdots,\lambda^{\alpha_N}x_N\right),
\end{equation}
where $\alpha_j=i$ whenever $m_{i-1}<j\leq m_i$, $i=1,\cdots,s$. Therefore, $1=\alpha_1=\cdots=\alpha_{n_1}<\alpha_{n_1+1}=2\leq\cdots\leq \alpha_n=s.$

 Before proving Theorem \ref{thm0}, we need the following elementary
properties of the group operation (see for example \cite{Mo}, \cite[Proposition 2.2.22]{BLU}).

\begin{lemma}\label{gp law}
The group law of $\mathcal G$ has the form
\begin{equation*}
g\circ g'=g+g'+P(g,g'),\quad \forall\  g, g'\in \mathbb{R}^N,
\end{equation*}
where $P=(P_1,P_2,\cdots,P_N):\mathbb{R}^N \times \mathbb{R}^N\rightarrow \mathbb{R}^N$ and each $P_j$ is a homogeneous polynomial of degree $\alpha_j$ with respect to the intrinsic dilations of $\mathcal G$ defined in \eqref{delta}, i.e.,
$$P_j(\delta(g),\delta(g'))=\lambda^{\alpha_j}P_j(g,g'),\quad \forall\  g,g'\in\mathcal G.$$
Moreover,
\begin{itemize}
\item[(i)] $P$ is anti-symmetric, i.e.,   for any $g,g'\in\mathcal G$, 
$P_j(g,g')=-P_j(-g',g).$
\item[(ii)] For any $g,g'\in\mathcal G$, 
$P_1(g,g')=\cdots=P_{n}(g,g')=0.$
\item[(iii)] For $n<j\leq N$, 
$P_j(g,0)=P_j(0,g'),\  P_j(g,g)=P_j(g,-g)=0.$
\item[(iv)] For any $g=(x_1,\cdots,x_N)$ and $g'=(y_1,\cdots,y_N)$, if $j\leq m_i$, $1<i\leq s$, 
$P_j(g,g')=P_j(x_1,\cdots,x_{m_{i-1}},y_1,\cdots,y_{m_{i-1}}).$
\item[(v)] 
$P_j(g,g')=\sum_{l,h}R^{i}_{l,h}(g,g')(x_l y_h-x_h y_l),$
where the functions $R^{i}_{l,h}$ are polynomials, homogeneous of degree $\alpha_i-\alpha_l-\alpha_h$ with respect to group dilations, and the sum is extended to all $l, h$ such that $\alpha_l+\alpha_h\leq \alpha_i$.
 \end{itemize}
\end{lemma}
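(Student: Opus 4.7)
The plan is to derive every property from the Baker--Campbell--Hausdorff (BCH) formula together with the graded nilpotent structure of $\mathfrak{g}$. Since $\mathcal{G}$ is nilpotent of step $s$, all iterated Lie brackets of length exceeding $s$ vanish, so the BCH series terminates and defines a polynomial law on $\mathfrak{g}\cong\mathbb{R}^N$. Writing $X=\sum_i x_i X_i$ and $Y=\sum_i y_i X_i$, one has
$$\exp(X)\cdot\exp(Y)=\exp\Bigl(X+Y+\tfrac{1}{2}[X,Y]+\tfrac{1}{12}\bigl([X,[X,Y]]-[Y,[X,Y]]\bigr)+\cdots\Bigr),$$
and after identifying $g=\exp X$ with $(x_1,\dots,x_N)\in\mathbb{R}^N$ this gives $g\circ g'=g+g'+P(g,g')$ with $P$ polynomial.

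To pin down the homogeneity, I would use $[V_a,V_b]\subseteq V_{a+b}$: each $k$-fold bracket $[X_{i_1},[X_{i_2},[\cdots,X_{i_k}]]]$ with $X_{i_\ell}\in V_{\beta_\ell}$ sits in $V_{\beta_1+\cdots+\beta_k}$, and its monomial coefficient (a product of certain $x_{i_\ell}$'s and $y_{i_\ell}$'s) scales by $\lambda^{\beta_1+\cdots+\beta_k}$ under $\delta_\lambda$. This matches the dilation weight $\alpha_j$ of the basis vector in $V_{\beta_1+\cdots+\beta_k}$ against which the bracket is expanded, proving the claimed $\alpha_j$-homogeneity of $P_j$. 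Since no bracket lands in $V_1$, this also gives (ii). Property (iv) is of the same flavor: a bracket contributing to $V_i$ must have total weight $i$, so each of its entries $X_\ell$ has $\alpha_\ell<i$, i.e.\ $\ell\le m_{i-1}$.

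Properties (i) and (iii) reduce to a few elementary computations in exponential coordinates, where $g^{-1}=-g$. Specifically, $g\circ 0=g$ and $0\circ g'=g'$ force $P(g,0)=P(0,g')=0$; the identity $\exp(X)\exp(X)=\exp(2X)$ gives $P(g,g)=0$; and $\exp(X)\exp(-X)=e$ gives $P(g,-g)=0$. For (i), the relation $(g\circ g')^{-1}=g'^{-1}\circ g^{-1}$ combined with $g^{-1}=-g$ in exponential coordinates yields, after matching the polynomial form of $P$ on both sides, the stated antisymmetry relation.

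The main obstacle I anticipate is property (v), the explicit representation with the antisymmetric factors $x_ly_h-x_hy_l$. The starting observation is that the lowest-order BCH term $\tfrac{1}{2}[X,Y]=\tfrac{1}{2}\sum_{l,h} x_ly_h[X_l,X_h]$, after symmetrising using $[X_l,X_h]=-[X_h,X_l]$, equals $\tfrac{1}{4}\sum_{l,h}(x_ly_h-x_hy_l)[X_l,X_h]$, directly exhibiting the claimed form. For the higher iterated brackets, the strategy is to invoke the Jacobi identity repeatedly, reducing each nested bracket to left-normed form with a single outermost Lie bracket $[X_l,X_h]$, so that the same antisymmetrisation trick applies and the remaining inner brackets are absorbed into the coefficient $R^i_{l,h}(g,g')$. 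The homogeneity degree $\alpha_i-\alpha_l-\alpha_h$ of $R^i_{l,h}$ then follows from weight bookkeeping on the outermost bracket, and the restriction $\alpha_l+\alpha_h\le\alpha_i$ is precisely what ensures that this degree is non-negative, so that $R^i_{l,h}$ is a genuine polynomial.
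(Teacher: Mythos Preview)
The paper does not actually prove this lemma; it is quoted verbatim from the references \cite{Mo} and \cite[Proposition~2.2.22]{BLU} with no argument given. Your Baker--Campbell--Hausdorff outline is precisely the standard route taken in those sources, so there is nothing to compare: your sketch \emph{is} the proof that the paper outsources.

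Two minor remarks on your write-up. First, the inverse identity $(g\circ g')^{-1}=g'^{-1}\circ g^{-1}$ together with $g^{-1}=-g$ gives $P(g,g')=-P(-g',-g)$, with a minus sign on \emph{both} arguments; the version printed in~(i) appears to be a misprint in the paper, and your derivation is the correct one. Second, for~(v) you do not need Jacobi manipulations to move a bracket to the outermost position: every BCH term beyond $X+Y$ contains at least one factor $[X,Y]$ somewhere in the nest, and expanding that single factor as $[X,Y]=\tfrac12\sum_{l,h}(x_ly_h-x_hy_l)[X_l,X_h]$ already produces the antisymmetric factor, with the rest of the nested bracket absorbed into the polynomial coefficient $R^i_{l,h}$. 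The homogeneity bookkeeping you describe then finishes the job.
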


\begin{remark}
It follows from Lemma \ref{gp law} that $\delta_{\lambda}:\mathcal{G}\rightarrow\mathcal{G}$ is an automorphism of the group, i.e.,
$$\delta_{\lambda}(g)\circ\delta_{\lambda}(g')=\delta_{\lambda}(g\circ g').$$
And the unit element of $\mathcal G$ is the origin $0=(0,\cdots,0)\in\mathbb{R}^N$. Consequently, the inverse $g^{-1}$ of an element $g=(x_1,\cdots, x_N)\in\mathcal{G}$ has the form 
$$g^{-1}=(-x_1,\cdots, -x_N).$$
\end{remark}

Define 
\begin{equation}\label{hom norm}
|g|_{\mathcal{G}}=\bigg(\sum_{j=1}^{s}\left|x^{(j)}\right|^{\frac{2s!}{j}}\bigg)^{1\over{2s!}},\quad g=(x^{(1)},\cdots, x^{(s)})\in\mathcal{G},
\end{equation}
where $|x^{(j)}|$ denotes the Euclidean norm on $\mathbb{R}^{n_j}$. Then $|\cdot|_{\mathcal{G}}$ is a homogeneous norm on $\mathcal G$ (see for example \cite[Section 5.1]{BLU}).
In what follows,  we will use $\rho(g)$ to denote  $|g|_{\mathcal{G}}$ for any $g\in\mathcal{G}$.

In \cite[Lemma 3.1]{DLLW},  the authors proved the following property for the Riesz kernel $K_j$. 
\begin{lemma}\label{lem non zero}
For all $1 \leq j \leq n$, we have $K_j \not\equiv 0$ in $\mathcal{G} \setminus \{ 0 \}$.
\end{lemma}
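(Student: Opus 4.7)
The plan is to verify $K_j \not\equiv 0$ by showing it is already nonzero along the one-parameter subgroup $\{\exp(t\X_j) : t>0\}$ generated by $\X_j$ itself, exploiting the dilation structure of $\mathcal G$.

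I would start from the integral representation
\[
K_j(g) = \frac{1}{\sqrt{\pi}} \int_0^{\infty} h^{-1/2}\, \X_j p_h(g)\, dh
\]
recalled in Section~2, and specialize to $g = \exp(t\X_j)$ for $t > 0$. Left-invariance of $\X_j$ gives $\X_j p_h(\exp(t\X_j)) = \frac{d}{dt} p_h(\exp(t\X_j))$; pulling the $t$-derivative outside the $h$-integral, which is justified by the standard sub-Gaussian bounds on $p_h$ and $\X_j p_h$, then produces
\[
K_j(\exp(t\X_j)) = \frac{1}{\sqrt{\pi}}\, \frac{d}{dt}\int_0^{\infty} h^{-1/2}\, p_h(\exp(t\X_j))\, dh.
\]

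Next, since $\X_j$ lies in the first layer $V_1$, its dilation weight is $1$, so in exponential coordinates $\exp(t\X_j) = \delta_t(\exp \X_j)$. Applying the heat-kernel scaling $p_h(\delta_t g) = t^{-Q} p_{h/t^2}(g)$, which follows directly from \eqref{hkp1}, and substituting $s = h/t^2$ evaluate the inner integral in closed form:
\[
\int_0^{\infty} h^{-1/2}\, p_h(\exp(t\X_j))\, dh \;=\; C\, t^{1-Q}, \qquad C := \int_0^{\infty} s^{-1/2}\, p_s(\exp \X_j)\, ds.
\]
The constant $C$ is strictly positive since $p_s > 0$ everywhere on $\mathcal G$, and finite as soon as $Q \ge 2$ by the standard Gaussian bound $p_s(g) \lesssim s^{-Q/2} e^{-c\, d(g,e)^2/s}$ together with $\exp \X_j \neq e$. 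Differentiating in $t$ then yields
\[
K_j(\exp(t\X_j)) \;=\; \frac{C\,(1-Q)}{\sqrt{\pi}}\, t^{-Q},
\]
which is strictly nonzero for every $t > 0$ whenever $Q \ge 2$. The only remaining case $Q = 1$ forces $\mathcal G = \R$, for which $\mathcal R_1$ reduces to the classical Hilbert transform whose kernel $-\tfrac{1}{\pi x}$ manifestly does not vanish.

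The main technical points are the justification of interchanging $\frac{d}{dt}$ with the $h$-integral and the convergence of $C$, both of which reduce to routine applications of the pointwise heat-kernel estimates on $\mathcal G$. The conceptual content is simply the interplay between left-invariance of $\X_j$ and homogeneity of $p_h$ under the dilations $\delta_t$, which combine to give an explicit closed-form evaluation of $K_j$ along the $\X_j$-subgroup; no delicate analysis of the singular integral structure of $K_j$ is needed.
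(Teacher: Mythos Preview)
Your argument is correct. The paper does not supply its own proof of this lemma; it simply quotes the result from \cite[Lemma~3.1]{DLLW}. Your approach gives a clean, self-contained proof by explicitly computing $K_j$ along the one-parameter subgroup generated by $\X_j$: left-invariance turns $\X_j p_h$ into a $t$-derivative there, and the layer-one homogeneity $\exp(t\X_j)=\delta_t(\exp \X_j)$ combined with the heat-kernel scaling \eqref{hkp1} collapses the $h$-integral to $C\,t^{1-Q}$, from which nonvanishing follows at once. This is actually more informative than a bare non-identical-vanishing statement, since it yields the explicit value $K_j(\exp(t\X_j))=\pi^{-1/2}(1-Q)\,C\,t^{-Q}$ with $C>0$, and in particular determines the sign of $K_j$ along that ray (consistent with the homogeneity \eqref{kjs}). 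The convergence of $C$ and the interchange of $\tfrac{d}{dt}$ with the $h$-integral are, as you say, routine consequences of the sub-Gaussian bounds on stratified groups; the separate treatment of $Q=1$ is needed exactly because $\int_0^\infty s^{-1/2}p_s(\exp\X_j)\,ds$ diverges at infinity when $Q=1$.
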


%

Now we prove Theorem \ref{thm0}.

\begin{proof}[Proof of Theorem \ref{thm0}]
For any fixed $ j\in\{1,\ldots,n\} $, by Lemma \ref{lem non zero} and the scaling property of $K_j$ (c.f. \eqref{kjs}), there exists a compact set $\Omega$ on the unit sphere $S_\rho(0,1)$ with $\sigma(\Omega)>0$, where $\sigma$ is the Radon measure on $S_\rho(0,1)$, satisfying 
$$ \rho(\tilde g)=1\quad{\rm and}\quad K_j(\tilde g)\not=0,\quad \forall\ \tilde{g}\in\Omega, $$
and all the values $K_j(\tilde g)$ on $\Omega$ have the same sign.

We claim that there exist $ 0<\varepsilon_o\ll1$ and $C(K_j)$ such that  for any $0 <\eta< \varepsilon_o$, any $\tilde g\in \Omega$ and for all  $g \in \mathcal G$ and $r > 0$, 
\begin{align}
 |K_j(g_1, g_2)| \geq C(K_j) r^{- Q}, \quad   |K_j(g_2, g_1)| \geq C(K_j) r^{- Q}.
\end{align}
for any $ g_1 \in B_\rho(g, \eta r),   g_2 \in B_\rho\left(g \circ \delta_r( \tilde g^{-1}), \eta r\right)$.
Moreover, all $K_j(g_1,g_2)$ and all $K_j(g_2,g_1)$ have the same sign.

In fact, for any fixed $\tilde {g}\in\Omega$, since $K_j$ is a $C^\infty$ function in $\mathcal G\backslash \{0\}$, there exists $ 0<\varepsilon_{\tilde{g}}\ll1$ such that
\begin{align}\label{non zero e1}
K_j(g')\not=0\quad {\rm and} \quad  |K_j(g')|>{1\over 2} |K_j(\tilde g)|
\end{align}
for all $g'\in B_\rho(\tilde g, 4C_\rho^2 \varepsilon_{\tilde{g}})$, where $C_\rho \geq 1$ is the constant from \eqref{qdr}. To be more specific, we have that  for all $g'\in B_\rho(\tilde g, 4 C_\rho^2 \varepsilon_{\tilde{g}})$, the values
$K_j(g')$ and $K_j(\tilde g)$ have the same sign.

Since $\Omega$ is compact, and 
$$\bigcup_{\tilde g\in \Omega}B_\rho\big(\tilde{g}, \varepsilon_{\tilde{g}}\big)\supset \Omega,$$
we have a finite subcover, say $B_\rho(\tilde{g}_1, \varepsilon_{\tilde{g}_1}),\cdots, B_\rho(\tilde{g}_m, \varepsilon_{\tilde{g}_m})$. 
Then for any $\tilde g\in\Omega$, there exists $1\leq l\leq m$ such that
$$\tilde g\in B_\rho\big(\tilde{g}_l, \varepsilon_{\tilde{g}_l}\big).$$

For any fixed $g\in \mathcal G$, let
\begin{align*}
 g_*= g \circ \delta_r( \tilde g^{-1} ).
\end{align*}
Then 
\begin{align*}
\rho(g,g_*) = \rho(g, g \circ \delta_r( \tilde g^{-1} )) =r.
\end{align*}

Let $\varepsilon_o=\min_{1\leq l\leq m}\{\varepsilon_{\tilde{g}_l}\}$, for every $\eta\in(0,  \varepsilon_o)$,
 we consider the two balls $B_\rho( g,  \eta r)$ and $B_\rho( g_*,  \eta r) $.
 It is clear that for every $g_1\in B_\rho( g,  \eta r)$, we can write
 $$ g_1 = g\circ \delta_r (g'_1),$$
where $g'_1 \in B_\rho(0,  \eta)$.
Similarly, for every $g_2\in B( g_*,  \eta r)$, we can write
 $$ g_2 = g_*\circ \delta_r (g'_2), $$
where $g'_2 \in B_\rho(0,  \eta)$.

As a consequence, we have
\begin{align}\label{dilation}
K_j(g_1,g_2) &= K_j\big(   g\circ \delta_r (g'_1) ,   g_*\circ \delta_r (g'_2)   \big)\\
&= K_j\big(   g\circ \delta_r (g'_1) ,    g \circ \delta_r( \tilde g^{-1} )\circ \delta_r (g'_2)   \big)\nonumber\\
&= r^{-Q} K_j\big(    (g'_2)^{-1} \circ    \tilde g\circ g'_1  \big).\nonumber
\end{align}
Similarly,
\begin{align}\label{dilation0}
K_j(g_2,g_1) = r^{-Q} K_j\big(    (g'_1)^{-1} \circ    \tilde g\circ g'_2  \big).
\end{align}

Next, we note that
\begin{align*}
\rho\big(    (g'_2)^{-1} \circ    \tilde g \circ g'_1, \tilde g_l \big) &= \rho\big(      \tilde g\circ g'_1, g'_2 \circ  \tilde g_l \big)\\
&\leq C_\rho^2 \, \left[ \rho\big(      \tilde g \circ g'_1,   \tilde g \big)+\rho\big(\tilde g, \tilde g_l\big)+ \rho\big(      \tilde g_l , g'_2 \circ  \tilde g_l \big) \right]\\
&\leq3 C_\rho^2 \varepsilon_{\tilde g_l},
\end{align*}
and also
$$\rho\big(    (g'_1)^{-1} \circ    \tilde g \circ g'_2, \tilde g_l \big) \leq3 C_\rho^2 \varepsilon_{\tilde g_l},$$
which shows that $ (g'_2)^{-1} \circ    \tilde g \circ g'_1$ and  $ (g'_1)^{-1} \circ    \tilde g \circ g'_2$ are contained in the ball $B_\rho(\tilde g_l, 4C_\rho^2 \varepsilon_{\tilde g_l})$
for all $g'_1 \in B_\rho(0,  \eta)$ and for all $g'_2 \in B_\rho(0,  \eta)$.

Thus, from  \eqref{non zero e1}, we obtain that
\begin{align}\label{lower bound e1}
| K_j\big(    (g'_2)^{-1} \circ    \tilde g \circ g'_1  \big)| > {1\over 2 } | K_j(\tilde g_l)|, \quad
| K_j\big(    (g'_1)^{-1} \circ    \tilde g \circ g'_2  \big)| > {1\over 2 } | K_j(\tilde g_l)|,
\end{align}
for all $g'_1 \in B_\rho(0,  \eta)$ and for all $g'_2 \in B_\rho(0,  \eta)$. Moreover, 
$K_j(    (g'_2)^{-1} \circ    \tilde g \circ g'_1 )$,  $K_j(    (g'_1)^{-1} \circ    \tilde g \circ g'_2  )$ and $K_j(\tilde g_l)$ have the same sign.

Now
combining  \eqref{dilation}, \eqref {dilation0} and \eqref{lower bound e1},  we obtain that
\begin{align}\label{lower bound e2}
|K_j(g_1,g_2)|   > {1\over 2 } r^{-Q} |K_j(\tilde g_l)|,\quad |K_j(g_2,g_1)|   > {1\over 2 } r^{-Q} |K_j(\tilde g_l)|
\end{align}
for every $g_1\in B_\rho( g,  \eta r)$ and for every $g_2\in B_\rho( g_*,  \eta r)$ , where $K_j(g_1,g_2)$, $K_j(g_2,g_1)$ and $K_j(\tilde g_l)$ have the same sign. Here $K_j(\tilde g_l)$ is a fixed
constant independent of $\eta$, $r$, $g$, $g_1$ and $g_2$. Set
$$C(K_j)={1\over 2}\min_{1\leq l\leq m}\left\{|K_j(\tilde g_l)|\right\}.$$

From the lower bound \eqref{lower bound e2} above, we further obtain that
for every $\eta\in (0,\varepsilon_o)$,
\begin{align*}
|K_j(g_1,g_2)|   > C(K_j) r^{-Q}, \quad |K_j(g_2,g_1)|> C(K_j) r^{-Q}
\end{align*}
for every $g_1\in B_\rho( g,  \eta r)$, every $g_2\in B_\rho( g \circ \delta_r( \tilde g^{-1} ),  \eta r)$ and every $\tilde g\in\Omega$. Moreover, since $K_j(\tilde g_l), 1\leq l\leq m,$ have the same sign, we can see, for any $\tilde g\in\Omega$,
the sign of $K_j(g_1,g_2)$ and of $K_j(g_2,g_1)$ are invariant, respectively, 
for every $g_1\in B_\rho( g,  \eta r)$ and every $g_2\in B_\rho(g \circ \delta_r( \tilde g^{-1} ),  \eta r)$.
\color{black}

It can be checked that there exists $r_*=r_*(s)>{10\over \varepsilon_o}$ such that for $r>r_*$, we have 
$$\max_{1\leq \nu\leq s }\big\{ (r^{\nu}-1)^{1\over \nu}\big\}=(r^{s}-1)^{1\over s}\quad {\rm{and}}\quad
\min_{1\leq \nu\leq s }\big\{ (r^{\nu}+1)^{1\over \nu}\big\}=(r^{s}+1)^{1\over s}.$$

Step 1, take $r_1=r_*$, we can have $\eta_1<\varepsilon_o$ such that $\eta_1r_1=1$. Let
$$E_1:=\big\{g'\in\tau_{g}\big(\delta_{\varrho}(\tilde\Omega)\big): (r_1^{s}-1)^{1\over s}<\varrho< (r_1^{s}+1)^{1\over s}\big\},$$
where $\tilde \Omega=\{g^{-1}: g\in\Omega\}$ and $\tau_{g}(\delta_{\varrho}(\tilde\Omega))=\{g\circ\delta_{\varrho}(\tilde g^{-1}): \tilde g\in \Omega\}.$ 
Recall that for any $g\in\mathcal G$, $g^{-1}=-g$, then we have
$$|E_1|
=\frac{\sigma(\Omega)}{Q}\big[(r_1^{s}+1)^{Q\over s}-(r_1^{s}-1)^{Q\over s}\big].$$

For any $g'\in E_1$, there exists $\tilde g=(x^{(1)},\cdots, x^{(s)})\in\Omega$ such that $g'=g\circ \delta_{\varrho}(\tilde g^{-1})$, where $x^{(\nu)}\in\mathbb{R}^{n_{\nu}}, \nu=1,\cdots, s$. Then by Lemma \ref{gp law} and \eqref{hom norm}, we have
\begin{align*}
\rho(g', g\circ\delta_{r_1}(\tilde g^{-1}))&=\rho\big(g\circ \delta_{\varrho}(\tilde g^{-1}), g\circ\delta_{r_1}(\tilde g^{-1})\big)
=\rho\big(\delta_{\varrho}(\tilde g^{-1}), \delta_{r_1}(\tilde g^{-1})\big)\\
&=\Big(\sum_{\nu=1}^{s}\big|\left(\varrho^{\nu}-r_1^{\nu}\right)(-x)^{(\nu)}\big|^{{2s!}\over \nu}\Big)^{1\over {2s!}}\\
&\leq\max_{1\leq\nu\leq s}|\varrho^{\nu}-r_1^{\nu}|^{1\over \nu}<1,
\end{align*}
which implies that $g'\in B_\rho(g\circ\delta_{r_1}(\tilde g^{-1}),1)$. Therefore, by our claim, for any $g_1\in B_\rho(g,1)$, 
$$|K_j(g_1,g')|\geq C(K_j)r_1^{-Q}\geq C(K_j,Q) \rho(g_1,g')^{-Q},$$
and also
$$|K_j(g',g_1)|\geq C(K_j,Q) \rho(g_1,g')^{-Q}.$$
Moreover,  for every $g'\in E_1$ and every $g_1\in B_\rho(g,1)$,  all $K_j(g_1,g')$ and all $K_j(g_1,g')$  have the same sign.

Step 2, take $r_2=(r_{1}^{s}+2)^{1\over s}$, we can choose $\eta_2<\varepsilon_o$ such that $\eta_2r_2=1$.
Let 
$$E_2:=\big\{g'\in\tau_{g}\big(\delta_{\varrho}(\tilde\Omega)\big): (r_2^{s}-1)^{1\over s}<\varrho< (r_2^{s}+1)^{1\over s}\big\}.$$
Then
$$ |E_2|=\frac{\sigma(\Omega)}{Q}\big[(r_2^{s}+1)^{Q\over s}-(r_2^{s}-1)^{Q\over s}\big].$$
Moreover, $E_2\cap E_1=\emptyset.$ By the same discussion as above, for any $g'\in E_2$ and any $g_1\in B_\rho(g,1)$, we have 
$$|K_j(g_1,g')|\geq  C(K_j,Q) \rho(g_1,g')^{-Q},\quad |K_j(g', g_1)|\geq  C(K_j,Q) \rho(g_1,g')^{-Q},$$
and all $K_j(g_1,g')$ and all $K_j(g',g_1)$ have the same sign as those when $g'\in E_1$.

In general, for $l\geq 2$, take $r_l=(r_{l-1}^{s}+2)^{1\over s}$ and let
$$E_l:=\big\{g'\in\tau_{g}\big(\delta_{\varrho}(\tilde\Omega)\big): (r_l^{s}-1)^{1\over s}<\varrho< (r_l^{s}+1)^{1\over s}\big\}.$$
Then
$$ |E_l|=\frac{\sigma(\Omega)}{Q}\big[(r_l^{s}+1)^{Q\over s}-(r_l^{s}-1)^{Q\over s}\big].$$
Moreover, $E_l\cap E_{l-1}=\emptyset.$ By the same discussion as above, for any $g'\in E_l$ and any $g_1\in B(g,1)$, we have 
$$|K_j(g_1,g')|\geq  C(K_j,Q) \rho(g_1,g')^{-Q},\quad |K_j(g,g_1)|\geq  C(K_j,Q) \rho(g_1,g')^{-Q},$$
and all $K_j(g_1,g')$ as well as all $K_j(g',g_1)$ have the same sign as those when $g'\in \cup_{\nu=1}^{l-1}E_{\nu}$.

Set
$$G_g=\bigcup_{l=1}^{\infty}E_l,$$
and $r_o:=(r_*^s-1)^{s}$,
 then
  $\inf_{g'\in G}\rho(g,g')=r_o$, and for every $g_1\in B_\rho(g,1)$ and $g_2\in G_g$, we have 
  $$|K_j(g_1,g_2)|\geq  C(K_j,Q) \rho(g_1,g_2)^{-Q},
  \quad K_j(g_2,g_1)|\geq  C(K_j,Q) \rho(g_1,g_2)^{-Q},$$
  and all $K_j(g_1,g_2)$, $K_j(g_2,g_1)$ have the same sign.
  Moreover, $|G_g|=\infty$ and for any $R_2>R_1>2r_o$,
  $$\left|\left(B_\rho(g,R_2)\setminus B_\rho(g,R_1)\right)\cap G_g\right|=\frac{\sigma(\Omega)}{Q}\big(R_2^{Q}-R_1^{Q}\big)\approx \left|B_\rho(g,R_2)\setminus B_\rho(g,R_1)\right|,$$
where the implicit constants are independent of $g, R_1$ and $R_2$.
This completes the proof of Theorem \ref{thm0}.
\end{proof}

By performing minor modification in the above proof, 
we can also get the similar result for any ball $B(g, R_0)$.
 \begin{corollary}\label{cor1}
Suppose that $\mathcal G$ is a stratified nilpotent Lie group with homogeneous dimension $Q$ and that $j\in\{1,2,\ldots,n\}$. Now let $C_{d_2}$ be the constant appeared in   \eqref{equi metric d}.
There exist a large positive constant $r_o$ and a positive constant $C$ depending on $K_j$, $Q$ and $C_{d_2}$ such that for every 
$g\in \mathcal G$ there exists a set $G_g\subset \mathcal G$ such that  $\inf_{g'\in G_g}\rho(g,g')=r_oR_0$ and that  for every $g_1\in B_\rho(g,R_0)$ and $g_2\in G_g$, we have
$$ |K_j(g_1,g_2)|\geq C d(g_1,g_2)^{-Q}, \quad
  |K_j(g_2,g_1)|\geq C  d(g_1,g_2)^{-Q},$$
all $K_j(g_1,g_2)$ as well as all $K_j(g_2,g_1)$ have the same sign.  

Moreover, the set $G_g$ is regular, in the sense that $|G_g|=\infty$ and that for any $R_2>R_1>2r_oR_0$,
$$ |\left(B_\rho(g,R_2)\setminus B_\rho(g,R_1) \right) \cap G_g|\approx \left|B_\rho(g,R_2)\setminus B_\rho(g,R_1)\right|,  $$
where the implicit constants are independent of $g, R_1$ and $R_2$. 
\end{corollary}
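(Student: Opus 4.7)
The plan is to reduce Corollary \ref{cor1} directly to the proof of Theorem \ref{thm0} by rescaling every radius in that proof by a factor of $R_0$. The main observation driving this reduction is that the pointwise lower-bound claim established at the beginning of the proof of Theorem \ref{thm0} is already uniform in the scale parameter $r > 0$: for any $\eta \in (0,\varepsilon_o)$, any $\tilde g \in \Omega$, any $g \in \mathcal G$, and any $r > 0$, one has $|K_j(g_1,g_2)|,\,|K_j(g_2,g_1)| \geq C(K_j) r^{-Q}$ with common sign whenever $g_1 \in B_\rho(g,\eta r)$ and $g_2 \in B_\rho(g \circ \delta_r(\tilde g^{-1}), \eta r)$. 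In particular, no analysis of $K_j$ itself needs to be redone; only the radii of the annular shells need to be scaled.

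First I would keep $\varepsilon_o$, the finite cover $\{B_\rho(\tilde g_l, \varepsilon_{\tilde g_l})\}$ of $\Omega$, and the constant $r_* = r_*(s)$ exactly as in the proof of Theorem \ref{thm0}, and then set $r_1 := r_* R_0$ and recursively $r_l := (r_{l-1}^s + 2 R_0^s)^{1/s}$ for $l \geq 2$. With $\eta_l := R_0 / r_l$ one has $\eta_l \leq 1/r_* < \varepsilon_o$ for every $l$, so the scaling claim applies at each scale $r_l$. The analogue of $E_l$ is the twisted annular shell
\[
E_l := \bigl\{ g' \in \tau_g(\delta_\varrho(\tilde \Omega)) : (r_l^s - R_0^s)^{1/s} < \varrho < (r_l^s + R_0^s)^{1/s} \bigr\}.
\]
The computation using Lemma \ref{gp law} and the homogeneous norm \eqref{hom norm}, exactly as in the theorem but with the bound $\max_{1 \leq \nu \leq s} |\varrho^\nu - r_l^\nu|^{1/\nu} < R_0$, shows $E_l \subset B_\rho(g \circ \delta_{r_l}(\tilde g^{-1}), R_0)$. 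Invoking the rescaled claim at $r = r_l$ then yields, for every $g_1 \in B_\rho(g, R_0)$ and $g_2 \in E_l$, the bound $|K_j(g_1,g_2)|,\,|K_j(g_2,g_1)| \geq C(K_j) r_l^{-Q} \gtrsim \rho(g_1,g_2)^{-Q}$. A final appeal to \eqref{equi metric d} converts the $\rho$-bound into a $d$-bound at the cost of a factor $C_{d_2}^Q$, producing the constant that depends on $K_j$, $Q$ and $C_{d_2}$. Sign consistency across all $E_l$ propagates by the same finite-cover argument used in the theorem, since all of $K_j(\tilde g_l)$, $1 \leq l \leq m$, already share a common sign.

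Next I would set $G_g := \bigcup_{l \geq 1} E_l$. Because $r_l^s = r_1^s + 2(l-1)R_0^s \to \infty$, the set $G_g$ spreads to infinity and $|G_g| = \infty$. Moreover, by construction the outer edge of $E_{l-1}$ at $(r_{l-1}^s + R_0^s)^{1/s}$ coincides with the inner edge of $E_l$ at $(r_l^s - R_0^s)^{1/s}$, so the $E_l$ tile the twisted sector without overlap and without gaps in the $\varrho$-variable beyond $(r_1^s - R_0^s)^{1/s}$. Combined with the explicit measure
\[
|E_l| = \frac{\sigma(\Omega)}{Q} \bigl[(r_l^s + R_0^s)^{Q/s} - (r_l^s - R_0^s)^{Q/s}\bigr],
\]
a telescoping sum gives $|(B_\rho(g,R_2) \setminus B_\rho(g,R_1)) \cap G_g| \approx \sigma(\Omega)(R_2^Q - R_1^Q)/Q$, which is comparable to $|B_\rho(g,R_2) \setminus B_\rho(g,R_1)|$ for all $R_2 > R_1 > 2 r_o R_0$ with $r_o := (r_*^s - 1)^s$, as required.

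The main obstacle is bookkeeping rather than analysis: one has to check that the choice of $r_*$ keeps $\eta_l < \varepsilon_o$ for every $l$, that the recursion $r_l^s = r_{l-1}^s + 2 R_0^s$ makes consecutive shells meet edge-to-edge rather than overlap or leave gaps, and that the common-sign property from Theorem \ref{thm0} still passes to every $E_l$. All three are immediate once the scaling is tracked carefully, so the corollary reduces to Theorem \ref{thm0} without any new ingredient.
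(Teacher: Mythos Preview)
Your proposal is correct and follows exactly the paper's approach: rescale every radius in the proof of Theorem \ref{thm0} by $R_0$, taking $r_1=r_*R_0$, $r_l=(r_{l-1}^s+2R_0^s)^{1/s}$, and $E_l=\{g'\in\tau_g(\delta_\varrho(\tilde\Omega)):(r_l^s-R_0^s)^{1/s}<\varrho<(r_l^s+R_0^s)^{1/s}\}$, then invoke \eqref{equi metric d} to pass from $\rho$ to $d$. You actually supply more bookkeeping detail than the paper does (the explicit check that $\eta_l<\varepsilon_o$ and the tiling observation), but the argument is identical.
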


\begin{proof}
By  taking $r_{1}=r_* R_0$, $r_j=(r_{j-1}^{s}+2R_0^s)^{1\over s}$ for $j\in\mathbb N$ and $j\geq 2$, and
$$E_l:=\left\{g'\in\tau_{g}\left(\delta_{\varrho}(\tilde\Omega)\right): (r_l^{s}-R_0^s)^{1\over s}<\varrho< (r_l^{s}+R_0^s)^{1\over s}\right\},\quad l\in\mathbb N,$$
in the proof of Theorem \ref{thm0}, we can see that
for any $g'\in E_l$ and any $g_1\in B(g,R_0)$, we have 
\begin{align*}
|K_j(g_1,g')|&\geq  C(K_j,Q) \rho(g_1,g')^{-Q}\geq  C(K_j,Q,C_{d_2}) d(g_1,g')^{-Q},\\
 |K_j(g,g_1)|&\geq  C(K_j,Q) \rho(g_1,g')^{-Q}\geq  C(K_j,Q,C_{d_2}) d(g_1,g')^{-Q},
 \end{align*}
and all $K_j(g_1,g')$ as well as all $K_j(g',g_1)$ have the same sign as those when $g'\in \cup_{\nu=1}^{l-1}E_{\nu}$. Set
$$G_g=\bigcup_{l=1}^{\infty}E_l,$$
 then
  $\inf_{g'\in G}\rho(g,g')=r_oR_o$, and for every $g_1\in B_\rho(g,R_0)$ and $g_2\in G_g$, we have 
  $$|K_j(g_1,g_2)|\geq  C(K_j,Q,C_{d_2}) d(g_1,g_2)^{-Q},
  \quad K_j(g_2,g_1)|\geq  C(K_j,Q,C_{d_2}) d(g_1,g_2)^{-Q},$$
  and all $K_j(g_1,g_2)$, $K_j(g_2,g_1)$ have the same sign. The rest part of the proof is the same as that of Theorem \ref{thm0}.
\end{proof}

\section{Compactness of Riesz transform commutator}

In this section, we will give the proof of Theorem \ref{thm1}.  
 We need the following upper and lower bounds for integrals of $[b, \mathcal R_{\ell}]$.
\begin{lemma}\label{lem up lw}
Assume that $b\in {\rm BMO}(\mathcal G)$ with $\|b\|_{{\rm BMO}(\mathcal G)}=1$ and there exist $\delta>0$ and a sequence $\{B_j\}_{j=1}^{\infty}:=\{B(g_j, r_j)\}_{j=1}^{\infty}$ of balls such that for each $j$,
\begin{align}
M(b,B_j)>\delta.
\end{align}
Then there exist functions $\{f_j\}\subset L_w^p(\mathcal G)$ with $\|f_j\|_{L_w^p(\mathcal G)}=1$, positive constants  $\beta_1>{C_{d_2}\over C_{d_1}}r_o, \beta_2, \beta_3$ such that for any integers $k\geq [\log_2\beta_1]$ and $j$,
\begin{align}\label{upper}
\int_{(2^{k+1}B_j\setminus 2^k B_j)\cap G_{g_j}}\big|[b,\mathcal R_{\ell}]f_j(g)\big|^p w(g)dg\geq \beta_2\delta^p 2^{-Qkp}{w(2^{k+1}B_j)\over w(B_j)},
\end{align} 
and
\begin{align}\label{lower}
\int_{2^{k+1}B_j\setminus 2^k B_j}\big|[b,\mathcal R_{\ell}]f_j(g)\big|^p w(g)dg\leq \beta_3 2^{-Qkp}{w(2^{k+1}B_j)\over w(B_j)},
\end{align} 
where $C_{d_1}$ and $C_{d_2}$ are in \eqref {equi metric d},  $r_o$ and $G_{g_j}$ are the same as those in Theorem \ref{thm0}.
\end{lemma}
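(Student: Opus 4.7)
The plan is to follow a Uchiyama-type argument adapted to $\mathcal G$, with the lower bound \eqref{upper} exploiting the constant-sign property of the kernel on the sector $G_{g_j}$ from Corollary \ref{cor1}, while the upper bound \eqref{lower} is a routine size estimate combined with the weighted John--Nirenberg inequality.

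\medskip

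\noindent\emph{Construction of the test functions.} For each $B_j=B(g_j,r_j)$, let $m_j$ be a median value of $b$ on $B_j$ (Definition \ref{medianv}). By \eqref{mmedian} the hypothesis $M(b,B_j)>\delta$ gives $\frac{1}{|B_j|}\int_{B_j}|b(y)-m_j|\,dy\gtrsim\delta$. Set
\[
f_j(y)=\frac{\operatorname{sgn}(b(y)-m_j)}{w(B_j)^{1/p}}\,\mathbf{1}_{B_j}(y),
\]
so $\|f_j\|_{L_w^p(\mathcal G)}\le 1$ and, crucially, $(b-m_j)f_j=|b-m_j|/w(B_j)^{1/p}$ is nonnegative on $B_j$. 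Apply Corollary \ref{cor1} with $R_0=C_{d_2}r_j$ (so $B_j\subset B_\rho(g_j,R_0)$) to produce the sector $G_{g_j}$. The choice $\beta_1>C_{d_2}r_o/C_{d_1}$ with $k\ge[\log_2\beta_1]$ places the shell $2^{k+1}B_j\setminus 2^kB_j$ in the regime where, for every $g\in(2^{k+1}B_j\setminus 2^kB_j)\cap G_{g_j}$ and every $y\in B_j$, the kernel $K_\ell(g,\cdot)$ has a fixed sign on $B_j$ and $|K_\ell(g,y)|\gtrsim(2^kr_j)^{-Q}$.

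\medskip

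\noindent\emph{Lower bound \eqref{upper}.} Decompose $[b,\mathcal R_\ell]f_j=T_1-T_2$ with $T_1(g)=(b(g)-m_j)\mathcal R_\ell f_j(g)$ and $T_2(g)=\mathcal R_\ell\bigl((b-m_j)f_j\bigr)(g)$. For $g\in(2^{k+1}B_j\setminus 2^kB_j)\cap G_{g_j}$, the non-negativity of $(b-m_j)f_j$ on $B_j$ combined with the constant sign of $K_\ell(g,\cdot)$ on $B_j$ yields
\[
|T_2(g)|=\frac{1}{w(B_j)^{1/p}}\int_{B_j}|K_\ell(g,y)|\,|b(y)-m_j|\,dy\;\gtrsim\;\frac{\delta\cdot 2^{-Qk}}{w(B_j)^{1/p}},
\]
while $|T_1(g)|\lesssim|b(g)-m_j|\,2^{-Qk}/w(B_j)^{1/p}$. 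On the subset $F_k:=\{g\in(2^{k+1}B_j\setminus 2^kB_j)\cap G_{g_j}:|T_1(g)|\le\tfrac12|T_2(g)|\}$ we have $|[b,\mathcal R_\ell]f_j(g)|\gtrsim|T_2(g)|$. The key estimate is to show that $w(F_k)\gtrsim w(2^{k+1}B_j)$: the regularity \eqref{regular} of $G_{g_j}$ at the annular scale gives $|(2^{k+1}B_j\setminus 2^kB_j)\cap G_{g_j}|\asymp|2^{k+1}B_j|$, Lemma \ref{lemlw} (the $A_p$ reverse H\"older property) transfers this comparability to $w$, and John--Nirenberg controls the measure of the level set $\{|b(g)-m_j|>c\delta\}$ within the shell. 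Integrating over $F_k$ yields \eqref{upper}.

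\medskip

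\noindent\emph{Upper bound \eqref{lower}.} From $|K_\ell(g,y)|\lesssim(2^kr_j)^{-Q}$, $|b(g)-b(y)|\le|b(g)-m_j|+|b(y)-m_j|$, and the BMO bound $\frac{1}{|B_j|}\int_{B_j}|b-m_j|\lesssim 1$,
\[
|[b,\mathcal R_\ell]f_j(g)|\;\lesssim\;\frac{2^{-Qk}}{w(B_j)^{1/p}}\bigl(|b(g)-m_j|+1\bigr).
\]
Raising to the $p$-th power, integrating with weight $w$ over $2^{k+1}B_j\setminus 2^kB_j$, and controlling $\int_{2^{k+1}B_j}|b-m_j|^p\,w$ by weighted John--Nirenberg together with $|b_{2^{k+1}B_j}-m_j|\lesssim 1$ absorbed into $\beta_3$ produces \eqref{lower}.

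\medskip

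\noindent\emph{Main obstacle.} The delicate step is the measure estimate on $F_k$ in the lower bound: pointwise $|b(g)-m_j|$ can exceed any fixed multiple of $\delta$ on far annular shells, since BMO oscillation across scales $2^kr_j$ grows like $k$. The resolution requires the refined annular regularity \eqref{regular} of $G_{g_j}$ (not merely the large-ball version \eqref{regular0} in Theorem A), which guarantees that the sector carries a definite proportion of \emph{each} annular shell. Coupled with the exponential decay of level sets of $|b-m_j|$ from John--Nirenberg restricted to each shell and the $A_p$ comparability of Lemma \ref{lemlw}, this shell-by-shell analysis is precisely what is needed to ensure $F_k$ retains weighted mass comparable to $w(2^{k+1}B_j)$.
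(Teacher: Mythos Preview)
Your lower-bound argument has a genuine gap, and the paper's proof proceeds by a different mechanism that you are missing.

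You bound $|T_1(g)|\lesssim |b(g)-m_j|\,2^{-Qk}w(B_j)^{-1/p}$ using only the \emph{size} of $K_\ell$, and then try to show the good set $F_k=\{|T_1|\le\tfrac12|T_2|\}$ retains mass via John--Nirenberg. But this cannot work: on the far shell $2^{k+1}B_j\setminus 2^kB_j$ the drift $|b_{2^{k+1}B_j}-m_j|$ may be of order $k$ (since $\|b\|_{{\rm BMO}}=1$), so by John--Nirenberg \emph{most} points $g$ in the shell satisfy $|b(g)-m_j|\sim k\gg\delta$, not $\lesssim\delta$. Your set $F_k$ can therefore be empty for large $k$, and no amount of annular regularity of $G_{g_j}$ rescues this. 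You correctly identify the obstacle in your last paragraph, but the proposed resolution does not address it. A secondary symptom of the same problem appears in your upper bound: writing $|b_{2^{k+1}B_j}-m_j|\lesssim 1$ is false (it is $\lesssim k$), so your argument actually yields \eqref{lower} with an extra factor $k^p$, which the lemma as stated does not allow.

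The paper's fix is to construct $f_j$ with \emph{mean zero}: it splits $B_j$ into $E_{j1},E_{j2}$ of equal Lebesgue measure with $E_{j1}\supset\{b\ge m_j\}$, $E_{j2}\supset\{b\le m_j\}$, and sets $f_j=w(B_j)^{-1/p}(\chi_{E_{j1}}-\chi_{E_{j2}})$, so that $\int f_j=0$ while $(b-m_j)f_j\ge 0$ is preserved. The cancellation lets one write, for $g$ outside $2B_j$,
\[
\mathcal R_\ell f_j(g)=\int_{B_j}\bigl(K_\ell(g,g')-K_\ell(g,g_j)\bigr)f_j(g')\,dg',
\]
and the kernel smoothness \eqref{kj} gains a factor $r_j/d(g,g_j)\sim 2^{-k}$. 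Consequently $\int_{2^{k+1}B_j\setminus 2^kB_j}|T_1|^p\,w\lesssim k^p\,2^{-kp(Q+1)}w(2^{k+1}B_j)/w(B_j)$, which for $k\ge[\log_2\beta_1]$ with $\beta_1$ large is strictly smaller than the $T_2$ lower bound $\gtrsim\delta^p 2^{-kpQ}w(2^{k+1}B_j)/w(B_j)$. No good-set argument is needed; the extra decay in $T_1$ does the work. The same cancellation is what removes the spurious $k^p$ from the upper bound.
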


\begin{proof}
For every $j\in\mathbb N $, we define $f_j$ as follows. By the definition of median value, we can find disjoint subsets $E_{j1}, E_{j2}\subset B_j$ such that 
\begin{align*}
E_{j1}\supset\{g\in B_j: b(g)\geq m_b(B_j)\},\quad E_{j2}\supset\{g\in B_j: b(g)\leq m_b(B_j)\},
\end{align*}
and
$|E_{j1}|=|E_{j2}|={1\over 2} \left|B_j\right|.$
Define 
$f_j(g)=w(B_j)^{-{1\over p}}\left(\chi_{E_{j1}}(g)-\chi_{E_{j2}}(g)\right).$
Then $f_j$ satisfies $\supp f_j\subset B_j$ and for every $g\in B_j$, 
\begin{align}\label{fjpro}
|f_j(g)|=w(B_j)^{-{1\over p}}, \quad f_j(g)\left(b(g)-m_b(B_j)\right)\geq0.
\end{align}
Moreover, 
\begin{align}\label{fjpro1}
\int_{B_j} f_j(g)dg=0,\quad 
\|f_j\|_{L^p_w(\mathcal G)}=1.
\end{align}

Note that 
$[b,\mathcal R_{\ell}]f=\mathcal R_{\ell}\big( (b-m_b(B_j))f\big)-\big(b-m_b(B_j)\big)\mathcal R_{\ell}(f).$
For any $g\in\mathcal G\setminus (2B_j)$, by \eqref {kj} and \eqref {fjpro1}, we have
\begin{align*}
\left| \big(b-m_b(B_j)\big)\mathcal R_\ell(f_j) (g)\right|&=\left| b(g)-m_b(B_j)\right|\left|\int_{B_j}\left(K_\ell(g,g')-K_\ell(g,g_j) \right)f_j(g')dg'  \right|\\
&\leq\left| b(g)-m_b(B_j)\right|\int_{B_j}\left|K_\ell(g,g')-K_\ell(g,g_j) \right| \left|f_j(g')\right|dg'  \\
&\lesssim\left| b(g)-m_b(B_j)\right|\int_{B_j}{d(g',g_j)\over d(g,g_j)^{Q+1}} \left|f_j(g')\right|dg'\\
&\lesssim {r_j\over d(g,g_j)^{Q+1}}\left(w(B_j)\right)^{-{1\over p}} |B_j| \left| b(g)-m_b(B_j)\right|.
\end{align*}

By John-Nirenberg inequality  (c.f. \cite{CRT}), for each $l\in\mathbb N$ and $B\subset\mathcal G$,
\begin{align}\label{bkp}
\nonumber\int_{2^{l+1}B} \left| b(g)-m_b(B)\right|^pdg
&\lesssim\int_{2^{l+1}B} \big| b(g)-m_b(2^{l+1}B)\big|^pdg+\big|2^{l+1}B\big| \big|m_b(2^{l+1}B)-m_b(B) \big|^p\\
&\lesssim l^p \big| 2^{l+1}B\big|.
\end{align}

Since $w\in A_p(\mathcal G)$, there exists $r>1$ such that $w\in RH_r(\mathcal G)$. Then by H\"older's inequality and \eqref{bkp}, we have
\begin{align} \label{bmup}
& \nonumber\int_{2^{k+1}B_j\setminus 2^{k}B_j} \left| \big(b-m_b(B_j)\big)\mathcal R_\ell(f_j) (g)\right|^pw(g)dg\\
& \nonumber\lesssim {r_j^p|B_j|^p\over w(B_j)}\int_{2^{k+1}B_j\setminus 2^{k}B_j}{1\over d(g,g_j)^{p(Q+1)}}\left| b(g)-m_b(B_j)\right|^pw(g)dg\\
&\lesssim {1\over 2^{kp(Q+1)}w(B_j)}\int_{2^{k+1}B_j\setminus 2^{k}B_j}\left| b(g)-m_b(B_j)\right|^pw(g)dg\\
& \nonumber\lesssim {1\over 2^{kp(Q+1)}w(B_j)}\left(\int_{2^{k+1}B_j\setminus 2^{k}B_j}\left| b(g)-m_b(B_j)\right|^{pr'}dg\right)^{1\over r'}
\left(\int_{2^{k+1}B_j}w(g)^{r}dg\right)^{1\over r}\\
& \nonumber\lesssim  {k^p\over 2^{kp(Q+1)}w(B_j)}\left| 2^{k+1}B_j\right|^{1\over r'}.\left| 2^{k+1}B_j\right|^{1\over r}
\left({1\over | 2^{k+1}B_j|}\int_{2^{k+1}B_j}w(g)^{r}dg\right)^{1\over r}\\
& \nonumber\leq \beta_4{k^p\over 2^{kp(Q+1)}}{w(2^{k+1}B_j)\over w(B_j)}.
\end{align}

For $g\in(\mathcal G\setminus 2^kB_j)\cap G_{g_j}$, $k>[\log_2({C_{d_2}\over C_{d_1}}r_o)]$, by Corollary \ref{cor1} and \eqref{kj}, we have 
\begin{align*}
\left| \mathcal R_{\ell}\big( (b-m_b(B_j))f_j(g)\big)\right|&=\int_{E_{j1}\cup E_{j2}}\left|K_\ell(g,g')\right|\big(b(g')-m_b(B_j)\big)f_j(g')dg' \\
&\gtrsim\int_{E_{j1}\cup E_{j2}}{1\over d(g, g')^Q}{\big|b(g')-m_b(B_j)\big|\over w(B_j)^{{1\over p}}}dg' \\
&\gtrsim {1\over w(B_j)^{1\over p} d(g, g_j)^Q}\int_{B_j}\big|b(g')-m_b(B_j)\big|dg'\\
&\gtrsim {\delta |B_j|\over w(B_j)^{1\over p} d(g, g_j)^Q}.
\end{align*}
Using Corollary \ref{cor1} again and Lemma \ref{lemlw}, we have
\begin{align}\label{bmlw}
&\nonumber\int_{(2^{k+1}B_j\setminus 2^{k}B_j)\cap G_{g_j}}\left| \mathcal R_{\ell}\big( \big(b-m_b(B_j)\big)f_j(g)\big)\right|^pw(g)dg\\
&\nonumber\gtrsim {\delta^p |B_j|^p\over w(B_j)}\int_{(2^{k+1}B_j\setminus 2^{k}B_j)\cap G_{g_j}}{1\over d(g, g_j)^{Qp}}w(g)dg\\
&\gtrsim {\delta^p \over 2^{Qkp}w(B_j)}\int_{(2^{k+1}B_j\setminus 2^{k}B_j)\cap G_{g_j}}w(g)dg\\
&\nonumber\gtrsim {\delta^p \over 2^{Qkp}w(B_j)}\left({|(2^{k+1}B_j\setminus 2^{k}B_j)\cap G_{g_j}|\over |2^{k+1}B_j|} \right)^p w\big(2^{k+1}B_j \big)\\
&\nonumber\gtrsim {\delta^p w(2^{k+1}B_j ) \over 2^{Qkp}w(B_j)}
\geq \beta_2{\delta^p\over 2^{Qkp}}{w(2^{k+1}B_j)\over w(B_j)}.
\end{align}

Take $\beta_1> {C_{d_2}\over C_{d_1}} r_o$ large enough such that for any integer $k\geq[\log_2 \beta_1]$, 
$$\beta_2 {\delta^p\over 2^{kpQ+p-1}}-\beta_4{k^p\over 2^{kp(Q+1)}}\geq \beta_2 {\delta^p\over 2^{kpQ+p}}.$$
Then by  \eqref{bmup} and \eqref{bmlw}, for any $k\geq[\log_2 \beta_1]$, we have
\begin{align*}
&\int_{(2^{k+1}B_j\setminus 2^{k}B_j)\cap G_{g_j}}\left| [b,\mathcal R_{\ell}]f_j(g)\big)\right|^pw(g)dg\\
&\geq{1\over 2^{p-1}}\int_{(2^{k+1}B_j\setminus 2^{k}B_j)\cap G_{g_j}}\left| \mathcal R_{\ell}\big( \big(b-m_b(B_j)\big)f_j(g)\big)\right|^pw(g)dg\\
&\quad-\int_{(2^{k+1}B_j\setminus 2^{k}B_j)} \left| \big(b-m_b(B_j)\big)R_l(f_j) (g)\right|^pw(g)dg\\
&\geq \left(\beta_2 {\delta^p\over 2^{kpQ+p-1}}-\beta_4{k^p\over 2^{kp(Q+1)}}\right)
{w(2^{k+1}B_j)\over w(B_j)}\\
&\geq \beta_2 {\delta^p\over 2^{kpQ+p}}{w(2^{k+1}B_j)\over w(B_j)}.
\end{align*}

On the other hand,  for $g\in\mathcal G\setminus 2B_j$, we have
\begin{align*}
\left| \mathcal R_{\ell}\big( \big(b-m_b(B_j)\big)f_j(g)\big)\right|&\leq {1\over w(B_j)^{1\over p}}\int_{B_j}\left|K_\ell(g,g')\right|\big|b(g')-m_b(B_j)\big|dg' \\
&\lesssim  {1\over w(B_j)^{1\over p}}\int_{B_j}{1\over d(g,g')^Q}\big|b(g')-m_b(B_j)\big|dg' \\
&\lesssim {1\over w(B_j)^{1\over p}d(g,g_j)^Q}\int_{B_j}\big|b(g')-m_b(B_j)\big|dg' \\
&\lesssim {1\over w(B_j)^{1\over p}d(g,g_j)^Q}|B_j|.
\end{align*}
Therefore,
\begin{align*}
\int_{2^{k+1}B_j\setminus 2^{k}B_j}\left| \mathcal R_{\ell}\big( \big(b-m_b(B_j)\big)f_j(g)\big)\right|^pw(g)dg
&\leq{|B_j|^p\over w(B_j)}\int_{2^{k+1}B_j\setminus 2^{k}B_j}{1\over d(g,g_j)^{Qp}}w(g)dg\\
&\leq {|B_j|^p\over w(B_j)}{1\over (2^{k}r_j)^{pQ}}w\big( 2^{k+1}B_j\setminus 2^{k}B_j\big)\\
&\lesssim{1\over 2^{kpQ}} {w(2^{k+1}B_j)\over w(B_j)}.
\end{align*}
Take $k$ large enough such that ${k\over 2^k}<1$, then by \eqref{bmup}, we can obtain
\begin{align*}
\int_{2^{k+1}B_j\setminus 2^{k}B_j}\left| [b,\mathcal R_{\ell}]f_j(g)\big)\right|^pw(g)dg
&\leq\int_{2^{k+1}B_j\setminus 2^{k}B_j}\left| \mathcal R_{\ell}\big( \big(b-m_b(B_j)\big)f_j(g)\big)\right|^pw(g)dg\\
&\quad+\int_{2^{k+1}B_j\setminus 2^{k}B_j} \left| \big(b-m_b(B_j)\big)R_l(f_j) (g)\right|^pw(g)dg\\
&\lesssim \left({1\over 2^{kpQ}}+{k^p\over 2^{kp(Q+1)}}\right)
{w(2^{k+1}B_j)\over w(B_j)}\\
&\leq \beta_3{1\over 2^{kpQ}}{w(2^{k+1}B_j)\over w(B_j)}.
\end{align*}
This completes the proof of Lemma \ref{lem up lw}.
\end{proof}

Recall that in \cite{DLLWW}, we have established  the Bloom-type two weight estimates for the commutators $[b,\mathcal R_{j}]$. From this result, for the case of one weight, we have the following estimates.
\begin{lemma}[\cite{DLLWW}, Theorem 1.2]\label{lem one weight}
Suppose $w\in A_p(\mathcal G)$ and $j\in\{1,\ldots,n\}$. Then

{\rm (i)} if $b\in {\rm BMO}(\mathcal G)$, then 
$$ \|[b,\mathcal R_j](f)\|_{L^p_w(\mathcal G)} \lesssim \|b\|_{  {\rm BMO}(\mathcal G) } \|f\|_{L^p_w(\mathcal G)}.$$

{\rm (ii)} for every $b\in L^1_{loc}(\mathcal G)$, if 
$[b, \mathcal R_j]$ is bounded on $L^p_w(\mathcal G)$, then
$b\in {\rm BMO}(\mathcal G)$ with 
$$     
\|b\|_{  {\rm BMO}(\mathcal G) } \lesssim   \|[b,\mathcal R_j]\|_{L^p_w(\mathcal G)\to L^p_w(\mathcal G)}. $$
\end{lemma}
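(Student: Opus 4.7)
The plan is to prove the two parts separately, with part (i) following the standard Coifman--Rochberg--Weiss philosophy adapted to the weighted setting on the space of homogeneous type $(\mathcal G, d, dg)$, and part (ii) exploiting the sharp sign-preserving kernel lower bound provided by Theorem \ref{thm0} and Corollary \ref{cor1}.

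For part (i), my plan is to combine the $L^2$-boundedness of $\mathcal R_j$ (due to Folland--Stein) with the size/regularity kernel estimate \eqref{kj} to realize $\mathcal R_j$ as a Calder\'on--Zygmund operator on $(\mathcal G, d, dg)$, so that its $L^p_w$-boundedness for $w\in A_p(\mathcal G)$ is available. Then I will establish the pointwise sharp maximal estimate
\[
M^{\#}\bigl([b,\mathcal R_j]f\bigr)(g) \lesssim \|b\|_{{\rm BMO}(\mathcal G)}\Bigl(M_r(\mathcal R_j f)(g) + M_r f(g)\Bigr)
\]
for some $r>1$ with $r<p$ and $w\in A_{p/r}$, where $M_r h := M(|h|^r)^{1/r}$. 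The derivation is the usual one: decompose $b-b_{B}$ on a fixed ball $B$, handle the local piece via Kolmogorov's inequality and John--Nirenberg on $\mathcal G$ (available via \cite{CRT}), and handle the far piece via \eqref{kj} and a telescoping sum on dyadic annuli $2^k B\setminus 2^{k-1}B$ against $\|b\|_{{\rm BMO}(\mathcal G)}$. Once this pointwise bound is in hand, the weighted Fefferman--Stein inequality on spaces of homogeneous type, together with the $L^p_w$ boundedness of $\mathcal R_j$ and of the Hardy--Littlewood maximal function, will close the argument.

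For part (ii), the strategy is to use Corollary \ref{cor1} to convert the lower bound of the kernel into a lower bound for $[b,\mathcal R_j]$ tested against a suitable function. Fix any ball $B=B(g_0,r_B)$ and apply Corollary \ref{cor1} with $R_0=r_B$ to produce the twisted truncated sector $G_{g_0}$ on which $K_j(g,g')$ has a definite sign and satisfies $|K_j(g,g')|\gtrsim d(g,g')^{-Q}$ for $g\in B$, $g'\in G_{g_0}$. Following the median-value construction already used in Lemma~\ref{lem up lw}, I would split $B$ into disjoint sets $E_1\cup E_2$ of equal measure separating $\{b\ge m_b(B)\}$ from $\{b\le m_b(B)\}$ and set $f_B=w(B)^{-1/p}(\chi_{E_1}-\chi_{E_2})$, which satisfies $\|f_B\|_{L^p_w}=1$ and has vanishing integral. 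The cancellation of $f_B$ lets me write, for $g'\in G_{g_0}$ outside $2B$,
\[
[b,\mathcal R_j]f_B(g') = \int_{B}K_j(g',g)\bigl(b(g')-b(g)\bigr)f_B(g)\,dg,
\]
and on $G_{g_0}$ the sign of $K_j(g',g)$ is constant while $f_B(g)(b(g)-m_b(B))\ge 0$ pointwise, producing a lower bound
\[
\bigl|[b,\mathcal R_j]f_B(g')\bigr| \gtrsim \frac{1}{w(B)^{1/p}\,d(g',g_0)^Q}\int_{B}|b(g)-m_b(B)|\,dg.
\]
Integrating $|[b,\mathcal R_j]f_B|^p\,w$ over a single annulus $(2^{k+1}B\setminus 2^kB)\cap G_{g_0}$ for $k$ just above $\log_2(C_{d_2}r_o/C_{d_1})$ and invoking the regularity \eqref{regular} together with Lemma \ref{lemlw} gives
\[
M(b,B)^p \lesssim \frac{1}{w(B)}\int_{(2^{k+1}B\setminus 2^k B)\cap G_{g_0}} \bigl|[b,\mathcal R_j]f_B(g')\bigr|^p w(g')\,dg' \le \|[b,\mathcal R_j]\|_{L^p_w\to L^p_w}^p,
\]
where the equivalence \eqref{mmedian} converts $|b-m_b(B)|$-averages into $M(b,B)$. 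Taking the supremum over $B$ finishes part (ii).

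The main obstacle is the lower estimate in part (ii): one must simultaneously engineer the sign of the kernel, the sign of the test function, and the weighted measure of the relevant annulus. This is precisely what the new regularity property \eqref{regular} in Theorem \ref{thm0} (sector fills each annulus with comparable measure, not merely each large ball) is designed for, so the proof hinges on applying Corollary \ref{cor1} to obtain $G_{g_0}$ scaled to the ball $B$ under consideration. Part (i) is more routine given the existing Calder\'on--Zygmund and $A_p$-weight machinery on $\mathcal G$.
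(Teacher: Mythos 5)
Your statement is not proved in this paper at all: the authors import it wholesale from their earlier work, citing \cite{DLLWW}, Theorem~1.2 (a Bloom-type two-weight estimate) and specializing to the one-weight case. So there is no in-paper proof to compare against directly. The correct benchmark in this paper is Lemma~\ref{lem up lw}, which carries out precisely the local median/sign construction your part~(ii) sketches, and its proof reveals where your outline has a genuine gap.

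In part~(ii) you display $[b,\mathcal R_j]f_B(g') = \int_B K_j(g',g)\bigl(b(g')-b(g)\bigr)f_B(g)\,dg$ and then immediately assert a pointwise lower bound from the sign structure. That inference does not follow as stated, because the integrand splits as
\[
K_j(g',g)\bigl(b(g')-m_b(B)\bigr)f_B(g)\;-\;K_j(g',g)\bigl(b(g)-m_b(B)\bigr)f_B(g),
\]
and only the second piece, i.e.\ $\mathcal R_j\bigl((b-m_b(B))f_B\bigr)(g')$, has the fixed-sign integrand that yields $\gtrsim d(g',g_0)^{-Q}\,w(B)^{-1/p}\int_B|b-m_b(B)|$. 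The first piece equals $(b(g')-m_b(B))\,\mathcal R_j f_B(g')$ and is \emph{not} sign-controlled; one must show it is strictly smaller than the main term. This is exactly what \eqref{bmup} in the proof of Lemma~\ref{lem up lw} does: using $\int_B f_B=0$ to replace $K_j(g',g)$ by $K_j(g',g)-K_j(g',g_0)$ and gain one extra power of decay from \eqref{kj}, then John--Nirenberg to control $|b(g')-m_b(B)|$ on the $k$-th annulus by a factor of $k$, one finds the error term decays like $k^p 2^{-kp(Q+1)}$ versus $2^{-kpQ}$ for the main term. As a consequence, you cannot take $k$ merely ``just above $\log_2(C_{d_2}r_o/C_{d_1})$''; the paper needs $k\geq[\log_2\beta_1]$ with $\beta_1$ large enough that the error is dominated (see the inequality defining $\beta_1$ after \eqref{bmlw}). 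Your proposal omits this entire error-term estimate and the consequent constraint on the annulus index, which is the crux of making the lower bound rigorous.

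Part~(i) via realizing $\mathcal R_j$ as a Calder\'on--Zygmund operator on $(\mathcal G,d,dg)$, proving a sharp maximal pointwise domination, and invoking weighted Fefferman--Stein is a sound and standard route; it is not the Bloom-type two-weight argument of \cite{DLLWW} but it does deliver the one-weight statement claimed in Lemma~\ref{lem one weight}(i). If you patch part~(ii) by separating off and bounding $(b-m_b(B))\mathcal R_j f_B$ as above and pushing $k$ out far enough, your overall approach becomes a correct direct proof of the lemma, closely parallel to Lemma~\ref{lem up lw}.
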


G\'orka and Macios  established the Riesz-Kolmogorov theorem on doubling measure spaces \cite[Theorem 1]{GM}.
Since $A_p$ weights are doubling, we have the following corresponding result.
\begin{lemma}\label{lem r-k}
Let $1<p<\infty$, $g_0\in\mathcal G$. Then the subset $\mathcal F$ of $L^p_w(\mathcal G)$ is relatively
compact in $L^p_w(\mathcal G)$ if and only if the following conditions are satisfied:
 
 {\rm (i)}  $\mathcal F$ is bounded.
 
  {\rm (ii)}  $$\lim_{R\rightarrow\infty}\int_{\mathcal G\setminus B(g_0, R)}|f(g)|^p w(g)dg=0$$ uniformly for $f\in\mathcal F$.
  
  {\rm (iii)}  $$\lim_{r\rightarrow 0}\int_{\mathcal G}|f(g)-f_{B(g,r)}|^p w(g)dg=0$$ uniformly for $f\in\mathcal F$.
\end{lemma}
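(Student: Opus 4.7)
The plan is to reduce Lemma~\ref{lem r-k} to the G\'orka-Macios Riesz-Kolmogorov theorem on doubling metric measure spaces. Since $w \in A_p(\mathcal G)$, the weight $w$ is doubling with respect to Lebesgue measure, and the Lebesgue measure on $(\mathcal G, d)$ is itself doubling by the stratified structure; consequently $d\mu := w\, dg$ is a doubling Borel measure on $(\mathcal G, d)$, and the triple $(\mathcal G, d, \mu)$ falls within the scope of \cite[Theorem 1]{GM}. That theorem supplies conditions (i), (ii), and an analogue of (iii) in which the averaging is done with respect to the weighted measure $d\mu$ rather than Lebesgue measure, so the essential task reduces to reconciling these two formulations of (iii).

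For necessity, the standard $\varepsilon$-net argument works. Relative compactness implies total boundedness, giving (i); for (ii) and (iii), pick a finite $\varepsilon$-net $\{f_1, \ldots, f_N\}$ in $\mathcal F$, approximate each $f_i$ in $L^p_w$ by a $C_0^\infty(\mathcal G)$ function supported in a common large ball, and propagate the desired smallness to all of $\mathcal F$ by the triangle inequality. The operator $T_r f(g) := f_{B(g,r)}$ is controlled pointwise by the Hardy-Littlewood maximal function and is therefore bounded on $L^p_w(\mathcal G)$ uniformly in $r$ by the $A_p$ hypothesis; on each smooth compactly supported $f_i$ one has $T_r f_i \to f_i$ in $L^p_w$ by uniform continuity and dominated convergence, which together with the boundedness on $L^p_w$ gives the uniform convergence needed for (iii).

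For sufficiency, write $\tilde T_r f(g) := \mu(B(g,r))^{-1}\int_{B(g,r)} f\, d\mu$ for the weighted average. The key step is the comparison
\begin{equation*}
\|T_r f - \tilde T_r f\|_{L^p_w(\mathcal G)} \lesssim \varepsilon(r)\, \|f\|_{L^p_w(\mathcal G)}, \qquad \varepsilon(r) \to 0 \text{ as } r \to 0,
\end{equation*}
valid uniformly for $f$ in any $L^p_w$-bounded family. The main ingredient is Lemma~\ref{lemlw}, whose $A_p$ and reverse H\"older estimates relate $w$-averages to Lebesgue averages on small balls and quantify how close $w(B(g,r))/|B(g,r)|$ is to $w(g)$ on average. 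Combined with condition (iii) and uniform boundedness (i), this comparison transfers (iii) to its weighted analogue, at which point \cite[Theorem 1]{GM} yields relative compactness.

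The principal obstacle is establishing this comparison with quantitative decay in $r$, uniformly over bounded families, since it relies on a careful quantitative use of $A_p$ and $RH_r$ self-improvement combined with the Coifman-Weiss doubling structure of the Carnot-Carath\'eodory geometry on $\mathcal G$. A cleaner alternative is to argue directly via Arzel\`a-Ascoli: using (ii) to reduce to a large ball $\overline{B(g_0, 2R)}$, noting that (iii) approximates $f$ by $T_r f$ in $L^p_w$ uniformly over $\mathcal F$, and verifying that $\{T_r f : f \in \mathcal F\}$ is uniformly bounded and equicontinuous on $\overline{B(g_0, 2R)}$ for each fixed $r > 0$ (equicontinuity from the vanishing Lebesgue measure of the symmetric difference $B(g_1,r)\triangle B(g_2,r)$ as $d(g_1,g_2)\to 0$, uniform boundedness from H\"older's inequality and Lemma~\ref{lemlw}). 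This bypasses the weighted averages entirely at the price of a more hands-on Arzel\`a-Ascoli argument, but in either route the technical heart is transferring between the two natural averaging procedures present in the weighted setting.
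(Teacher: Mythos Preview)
The paper does not actually prove this lemma: it simply notes that $A_p$ weights are doubling and invokes \cite[Theorem~1]{GM}, treating the statement as an immediate corollary. In particular it does not address the discrepancy you identify, that condition~(iii) is stated with the \emph{Lebesgue} average $f_{B(g,r)}$ (cf.~\eqref{fave}) while the abstract Riesz--Kolmogorov theorem on the doubling space $(\mathcal G,d,w\,dg)$ naturally involves the $w$-weighted average. Your necessity argument and your direct Arzel\`a--Ascoli route for sufficiency are both correct, and together they furnish a self-contained proof where the paper only gestures at one: equicontinuity and uniform boundedness of $\{T_rf:f\in\mathcal F\}$ on compacta follow from H\"older and the local integrability of $w^{-1/(p-1)}$, the tail condition~(ii) passes to $T_rf$ via the maximal function, and condition~(iii) then transfers precompactness of each $\{T_rf\}$ back to $\mathcal F$.

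Your first sufficiency route, however, has a genuine gap. The comparison $\|T_rf-\tilde T_rf\|_{L^p_w}\lesssim\varepsilon(r)\|f\|_{L^p_w}$ with $\varepsilon(r)\to0$ is operator-norm convergence $T_r-\tilde T_r\to0$ on $L^p_w$, and this fails already for power weights on $\mathbb R$: with $w(x)=|x|^{\alpha}$, $0<\alpha<p-1$, the normalized functions $f_r=\chi_{[r/2,r]}/\|\chi_{[r/2,r]}\|_{L^p_w}$ satisfy $\|T_rf_r-\tilde T_rf_r\|_{L^p_w}\gtrsim1$ uniformly in small $r$, since at the origin $T_rf_r(0)=c_r/4$ while $\tilde T_rf_r(0)=c_r(1-2^{-\alpha-1})/2$ with $c_r\approx r^{-(\alpha+1)/p}$, and integrating the $p$-th power against $|x|^{\alpha}$ over $[0,r/100]$ recovers the missing factor $r^{\alpha+1}$. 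Lemma~\ref{lemlw} gives only two-sided comparability of $w$-mass and Lebesgue mass, not the asymptotic agreement this step would require. So discard the comparison and keep the Arzel\`a--Ascoli argument, which already bypasses \cite{GM} entirely for the sufficiency direction.
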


For the proof of  Theorem \ref{thm1}, we also need to establish the characterisation of ${\rm VMO}(\mathcal G)$. We will give its proof in Appendix. For the Euclidean case one can refer to \cite{U1}. 
\begin{theorem}\label{thm-cmo}
Let $f\in {\rm BMO}(\mathcal G)$. Then $f\in{\rm VMO}(\mathcal G)$ if and only if $f$ satisfies the following three conditions.
\begin{flalign*}\begin{split}
&{\rm (i)} \quad \quad\lim\limits_{a\rightarrow 0}\sup\limits_{r_B=a}M(f,B)=0.\\
&{\rm (ii)}  \quad\ \ \lim\limits_{a\rightarrow \infty}\sup\limits_{r_B=a}M(f,B)=0.\\
&{\rm (iii)}  \quad\ \lim\limits_{r\rightarrow \infty}\sup\limits_{B\subset \mathcal G\setminus B(0,r)}M(f,B)=0.
\end{split}&
\end{flalign*}
\end{theorem}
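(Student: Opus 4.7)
This is the standard Uchiyama-type characterisation of VMO \cite{U1} transplanted to the stratified Lie group setting: the easy direction uses only the triangle inequality for $M(\cdot,B)$, while the hard direction is built on a mollification-plus-truncation approximation. For necessity, given $f \in {\rm VMO}(\mathcal G)$ and $\varepsilon > 0$, pick $g \in C_0^\infty(\mathcal G)$ with $\|f-g\|_{{\rm BMO}(\mathcal G)} < \varepsilon$. Since $|M(f,B) - M(g,B)| \leq 2 \|f-g\|_{{\rm BMO}(\mathcal G)}$, it suffices to verify (i)--(iii) for $g$ itself, which is routine: with ${\rm supp}(g) \subset B(0, R_0)$ and $g$ Lipschitz of constant $L$ (with respect to the Carnot--Carath\'eodory metric), we have $M(g,B) \lesssim L\, r_B$ (giving (i)), $M(g,B) \lesssim \|g\|_{L^1(\mathcal G)}|B|^{-1}$ once $r_B \gg R_0$ (giving (ii)), and $M(g,B) = 0$ whenever $B \cap {\rm supp}(g) = \emptyset$ (giving (iii)).

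\textbf{Sufficiency: construction.} Now assume $f \in {\rm BMO}(\mathcal G)$ satisfies (i)--(iii), and fix $\varepsilon > 0$; the task is to exhibit $g \in C_0^\infty(\mathcal G)$ with $\|f - g\|_{{\rm BMO}(\mathcal G)} \lesssim \varepsilon$. Using (i)--(iii), select $0 < \delta_0 \ll R_0$ so that $M(f,B) < \varepsilon$ whenever $r_B < \delta_0$, or $r_B > R_0$, or $B \subset \mathcal G \setminus B(0, R_0)$. Fix a nonnegative $\phi \in C_0^\infty(\mathcal G)$ supported in $B_\rho(0, 1)$ with $\int \phi = 1$ and its dilates $\phi_\eta(h) = \eta^{-Q}\phi(\delta_{1/\eta}(h))$ for $\eta \ll \delta_0$; pick a smooth cutoff $\varphi \in C_0^\infty(\mathcal G)$ with $\varphi \equiv 1$ on $B(0, 2R_0)$, ${\rm supp}\,\varphi \subset B(0, 4R_0)$, $|\X_i \varphi| \lesssim R_0^{-1}$. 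Set $g := \varphi \cdot (f * \phi_\eta)$, which is clearly in $C_0^\infty(\mathcal G)$.

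\textbf{The uniform BMO estimate.} The bulk of the work is bounding $M(f-g, B) \lesssim \varepsilon$ uniformly in $B$, carried out by a case split on the size and location of $B$: (a) for $r_B < \delta_0$, condition (i) controls $M(f,B)$, and mollification at scale $\eta \ll r_B$ essentially preserves local BMO oscillation, while multiplication by the slowly-varying $\varphi$ adds only an $O(r_B/R_0)$ error; (b) for $\delta_0 \leq r_B \leq R_0$ with $B \subset \{\varphi \equiv 1\}$, one has $g = f * \phi_\eta$ on $B$, so the problem reduces to estimating $M(f - f*\phi_\eta, B)$, which tends to zero with $\eta$ via the identity $f - f*\phi_\eta = \int(f - f_h)\phi_\eta(h)\,dh$ combined with (i); (c) for $r_B > R_0$ or $B$ far from the origin, conditions (ii)/(iii) control $M(f,B)$, while the compactness of ${\rm supp}\,g$ gives either $M(g,B) \lesssim \|g\|_{L^1(\mathcal G)}/|B|$ small or $g \equiv 0$ on $B$.

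\textbf{Main obstacle.} The subtlest case is the transition zone, where $B$ intersects $\{\varphi < 1\} \cap {\rm supp}\,\varphi$ but is too small to fall into case (c). On such $B$ the pointwise product $\varphi \cdot (f*\phi_\eta)$ can inject oscillation coming from $\varphi$ itself, even where $f*\phi_\eta$ is nearly constant. The resolution is that every such ball lies essentially outside $B(0, R_0)$, so condition (iii) forces $f$ close to a constant on a mildly enlarged concentric ball, while $|\X_i \varphi| \lesssim R_0^{-1}$ bounds the product's extra oscillation by $\|f*\phi_\eta\|_{L^\infty(B)} \cdot r_B / R_0$, which is controlled once we replace averages by median values via \eqref{mmedian} and exploit the equivalence \eqref{equi metric d} of $\rho$ and $d$. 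Combining the four regimes yields $\|f - g\|_{{\rm BMO}(\mathcal G)} \lesssim \varepsilon$, completing the proof.
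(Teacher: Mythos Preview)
Your route differs from the paper's. The paper follows Uchiyama's original construction: it builds a covering of $\mathcal G$ by balls $B_g$ whose radii are $\sim 2^{-i_\varepsilon}$ near the origin and grow dyadically with $d(0,g)$, defines a piecewise-constant approximant $\phi_\varepsilon(g)=f_{B_g}$ on a large ball and a fixed constant outside, and then checks $M(f-\phi_\varepsilon,B)\lesssim\varepsilon$ case by case. Your mollify-and-cut-off construction $g=\varphi\cdot(f\ast\phi_\eta)$ is a legitimate alternative strategy, and when it works it is arguably cleaner. But as written your sketch has a real gap that neither the median-value remark nor the reference to \eqref{equi metric d} addresses.

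The missing ingredient is a normalization of $f$ by a constant before cutting off. Since $f$ is only in ${\rm BMO}(\mathcal G)$, the mollified function $f\ast\phi_\eta$ need not be bounded, and $\|g\|_{L^1}$ is not comparable to $\varepsilon\,|B(0,R_0)|$; hence in your case~(c) the bound $M(g,B)\lesssim\|g\|_{L^1}/|B|$ is \emph{not} small for $r_B$ merely larger than $R_0$. The same problem wrecks the transition-zone estimate: on a ball $B$ with $\delta_0\le r_B\le R_0$ lying in the annulus $B(0,4R_0)\setminus B(0,2R_0)$, writing $f-g=(1-\varphi)f+\varphi(f-f\ast\phi_\eta)$ forces you to control $\frac{1}{|B|}\int_B|(1-\varphi)-(1-\varphi)_B|\,|f|$, which is of order $(r_B/R_0)\cdot\frac{1}{|B|}\int_B|f|$; and $\frac{1}{|B|}\int_B|f|$ can be arbitrarily large. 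Replacing averages by medians does nothing here, because the obstruction is the \emph{size} of $f$, not its oscillation.

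The fix is to first show, using (ii), (iii) and a short chaining argument over balls of radius $\sim R_0$ covering the annulus, that there is a single constant $c_\infty$ with $|f_B-c_\infty|\lesssim\varepsilon$ for every ball $B$ of radius $\sim R_0$ contained in $\mathcal G\setminus B(0,R_0)$, and then to approximate $f-c_\infty$ rather than $f$ (equivalently, set $g=\varphi\cdot\big((f-c_\infty)\ast\phi_\eta\big)$). After this normalization, condition~(ii) gives $\frac{1}{|B(0,5R_0)|}\int_{B(0,5R_0)}|f-c_\infty|\lesssim\varepsilon$, so $\|g\|_{L^1}\lesssim\varepsilon\,|B(0,5R_0)|$ and case~(c) goes through; in the transition zone one gets $|f_B-c_\infty|\lesssim\varepsilon\log(R_0/r_B)$ via a doubling chain, and the product $(r_B/R_0)\log(R_0/r_B)$ is uniformly bounded, closing the estimate. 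You should make this step explicit.
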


Now we prove Theorem \ref{thm1}.

\begin{proof}[Proof of Theorem \ref{thm1}]
{\bf Sufficient condition:}
Assume that $[b, \mathcal R_{\ell}]$ is compact on $L^p_w(\mathcal G)$, then $[b,\mathcal R_{\ell}]$ is bounded on $L^p_w(\mathcal G)$. By Lemma \ref{lem one weight}, we have $b\in {\rm BMO}(\mathcal G)$. Without loss of generality, we may assume that $\|b\|_{  {\rm BMO}(\mathcal G) }=1$. To show $b\in {\rm VMO}(\mathcal G)$, we may use a contradiction argument via Theorem \ref {thm-cmo}. Suppose that $b\notin {\rm VMO}(\mathcal G)$, then $b$ does not satisfy at least one of the three conditions in Theorem \ref{thm-cmo}. We will consider these three cases seperately.

{\bf Case (i)}. Suppose $b$ does not satisfy (i) in Theorem \ref{thm-cmo}. Then there exist $\delta>0$ and a sequence $\{B_j\}_{j=1}^\infty:=\{B(x_j, r_j)\}_{j=1}^\infty$ of balls such that $M(f, B_j)>\delta$ and $r_j\rightarrow 0$ as $j\rightarrow \infty$. Let $f_j$, $\beta_1>{C_{d_2}\over C_{d_1}}r_o, \beta_2, \beta_3$ be as in Lemma \ref{lem up lw} and $C_1, C_2$ in Lemma \ref{lemlw} and $\gamma_1>\beta_1$ large enough such that
\begin{align}\label{gamma2}
\gamma_2^p
:={\beta_2\over C_2}\delta^p 2^{Q(4-{4\over r}-3p)}\beta_1^{Q(1-p-{1\over r})}
>{\beta_3\over C_1} {2^{p+Q(p-\sigma-[\log_2\gamma_1]\sigma)}\over 1-2
^{-Q\sigma}},
\end{align}
where $\sigma$ is in \eqref{epsilon}.

Since $r_j\rightarrow 0$ as $j\rightarrow \infty$, we may choose a subsequence $\{B^{(1)}_{j_i}\}$ of $\{B_j\}$ such that
\begin{align}\label{ratio1}
{|B^{(1)}_{j_{i+1}}|\over |B^{(1)}_{j_{i}}|}\leq{1\over \gamma_1^Q}.
\end{align}

For fixed $i, m\in\mathbb N$, denote
\begin{align*}
\Omega:=\gamma_1B^{(1)}_{j_{i}}\setminus \beta_1B^{(1)}_{j_{i}},\quad \Omega_1:=\Omega \setminus \gamma_1B^{(1)}_{j_{i+m}},\quad
\Omega_2:=\mathcal G \setminus \gamma_1B^{(1)}_{j_{i+m}}.
\end{align*}
It is clear that 
$$\Omega_1\subset \gamma_1B^{(1)}_{j_{i}}\cap \Omega_2,\quad 
\Omega_1=\Omega\setminus \left(\Omega\setminus \Omega_2\right). $$
Then we have
\begin{align*}
&\left\| [b, \mathcal R_{\ell} ](f_{j_i})- [b, \mathcal R_{\ell} ](f_{j_{i+m}})\right\|_{L^p_w(\mathcal G)}\\
&\geq \left( \int_{\Omega_1}\left|  [b, \mathcal R_{\ell} ](f_{j_i})(g)- [b, \mathcal R_{\ell} ](f_{j_{i+m}})(g)\right|^p w(g) dg \right)^{1\over p}\\
&\geq \left( \int_{\Omega_1}\left|  [b, \mathcal R_{\ell} ](f_{j_i})(g)\right|^p w(g) dg \right)^{1\over p} -
\left( \int_{\Omega_2}\left|  [b, \mathcal R_{\ell} ](f_{j_{i+m}})(g)\right|^p w(g) dg \right)^{1\over p}\\
&=\left( \int_{\Omega\setminus \left(\Omega\setminus \Omega_2\right)}\left|  [b, \mathcal R_{\ell} ](f_{j_i})(g)\right|^p w(g) dg \right)^{1\over p} -
\left( \int_{\Omega_2}\left|  [b, \mathcal R_{\ell} ](f_{j_{i+m}})(g)\right|^p w(g) dg \right)^{1\over p}\\
&=:I_1-I_2.
\end{align*}

We first consider the term $I_1$. Assume that $\Gamma_{j_i}:=\Omega\setminus \Omega_2\neq\emptyset$, then 
$\Gamma_{j_i}\subset \gamma_1B^{(1)}_{j_{i+m}}$. Hence,  by \eqref{ratio1}, we have
$$|\Gamma_{j_i}|\leq \left|\gamma_1B^{(1)}_{j_{i+m}}\right|=\gamma_1^Q\left| B^{(1)}_{j_{i+m}}\right|\leq\left| B^{(1)}_{j_{i}}\right|.$$
Now for each $k\geq [\log_2 \beta_1]$,
$$\left|2^{k+1}B_{j_i}^{(1)}\setminus 2^k B_{j_i}^{(1)}\right|=|B(0,1)|\left( 2^{(k+1)Q}-2^{kQ}\right)r^Q_{j_i}
>\left| 2^k B_{j_i}^{(1)}\right|>|\Gamma_{j_i}|.$$
From this fact, it follows that there exist at most two rings, $2^{k_0+2}B_{j_i}^{(1)}\setminus 2^{k_0+1}B_{j_i}^{(1)}$ and $2^{k_0+1}B_{j_i}^{(1)}\setminus 2^{k_0}B_{j_i}^{(1)}$ such that $\Gamma_{j_i}\subset (2^{k_0+2}B_{j_i}^{(1)}\setminus 2^{k_0+1}B_{j_i}^{(1)})\cup (2^{k_0+1}B_{j_i}^{(1)}\setminus 2^{k_0}B_{j_i}^{(1)})$.
Then by \eqref{upper} and Lemma \ref{lemlw}, we have
\begin{align}\label{i1}
I_1^p&=\int_{\Omega\setminus \left(\Omega\setminus \Omega_2\right)}\big|  [b, \mathcal R_{\ell} ](f_{j_i})(g)\big|^p w(g) dg\\
&\nonumber\geq \sum_{k=[\log_2 \beta_1]+1, ~k\neq k_0, k_0+1}^{[\log_2 \gamma_1]-1}\int_{(2^{k+1}B^{(1)}_{j_i}\setminus 2^{k}B^{(1)}_{j_i})\cap G_{g_{j_i}}}\big|  [b, \mathcal R_{\ell} ](f_{j_i})(g)\big|^p w(g) dg\\
& \nonumber\geq\beta_2  \delta^p \sum_{k=[\log_2 \beta_1]+1, ~k\neq k_0, k_0+1}^{[\log_2 \gamma_1]-1} 2^{-kQp}{w(2^{k+1}B_{j_i}^{(1)})\over w(B_{j_i}^{(1)})}\\
& \nonumber\geq {\beta_2\over C_2} \delta^p \sum_{k=[\log_2 \beta_1]+1, ~k\neq k_0, k_0+1}^{[\log_2 \gamma_1]-1} 2^{-kQp}2^{(k+1)Q(1-{1\over r})}\\
& \nonumber\geq {\beta_2\over C_2}\delta^p 2^{Q(1-{1\over r})}\sum_{k=[\log_2 \beta_1]+3}^{[\log_2 \gamma_1]-1}2^{-kQ(p-1+{1\over r})}\\
&\nonumber \geq{\beta_2\over C_2}\delta^p 2^{Q(4-{4\over r}-3p)}\beta_1^{Q(1-p-{1\over r})}
=:\gamma_2^p.
\end{align}

If $\Omega\setminus\Omega_2=\emptyset$, the inequality above still holds.

For $I_2$, by \eqref{lower} in Lemma \ref{lem up lw}, we have
\begin{align*}
I_2^p&=\int_{\mathcal G \setminus \gamma_1B^{(1)}_{j_{i+m}}}\left|  [b, \mathcal R_{\ell} ](f_{j_{i+m}})(g)\right|^p w(g) dg\\
&\leq \sum_{k=[\log_2\gamma_1]}^{\infty}\int_{2^{k+1}B^{(1)}_{j_{i+m}}\setminus 2^{k}B^{(1)}_{j_{i+m}}}
\left|  [b, \mathcal R_{\ell} ](f_{j_{i+m}})(g)\right|^p w(g) dg\\
&\leq \beta_3 \sum_{k=[\log_2\gamma_1]}^{\infty}2^{-kQp}{w(2^{k+1}B^{(1)}_{j_{i+m}})\over w(B^{(1)}_{j_{i+m}})}
\end{align*}
By \cite[Theorem 1.2]{HPR}, for any $1<p<\infty$ and for every $w\in A_p(\mathcal G)$, there is an $\sigma=\sigma([w]_{A_p}, p, Q)$, with $0<\sigma<p$ such that 
$w\in A_{p-\sigma}(\mathcal G)$. Thus, by Lemma \ref{lemlw} and \eqref{gamma2}, we have
\begin{align}\label{epsilon}
I_2^p&\leq{\beta_3\over C_1} \sum_{k=[\log_2\gamma_1]}^{\infty}2^{-kQp}2^{(k+1)Q(p-\sigma)}
\leq {\beta_3\over C_1} {2^{Q(p-\sigma-[\log_2\gamma_1]\sigma)}\over 1-2
^{-Q\sigma}}<\left({\gamma_2\over 2}\right)^p.
\end{align}
Consequently, 
\begin{align*}
\left\| [b, \mathcal R_{\ell} ](f_{j_i})- [b, \mathcal R_{\ell} ](f_{j_{i+m}})\right\|_{L^p_w(\mathcal G)}\geq
{\gamma_2\over 2}.
\end{align*}
Therefore, $ \{[b, \mathcal R_{\ell} ](f_{j})\}_{j=1}^{\infty}$ is not relatively compact in $L^p_w(\mathcal G)$, which implies that $[b, \mathcal R_{\ell} ]$ is not compact on $L^p_w(\mathcal G)$. Thus $b$ satisfies condition (i).

{\bf Case (ii).} If $b$ does not satisfy (ii) in Theorem \ref{thm-cmo}, then there also exist $\delta>0$ and a sequence $\{B_j\}_{j=1}^\infty$ of balls such that $M(f, B_j)>\delta$ and $r_j\rightarrow \infty$ as $j\rightarrow \infty$. We take a subsequence $\{B_{j_i}^{(2)}\}$ of $B_j$ such that 
\begin{align}\label{B2}
{\big|B_{j_i}^{(2)}\big|\over \big|B_{j_{i+1}}^{(2)}\big|}\leq{1\over \gamma_1^Q}.
\end{align}
The method in this case is very similar to that in case (i), we just redefine our sets in a reversed order, i.e. for fixed $i$ and $m$, let
\begin{align*}
\tilde\Omega:=\gamma_1B^{(2)}_{j_{i+m}}\setminus \beta_1B^{(2)}_{j_{i+m}},\quad
\tilde\Omega_1:=\Omega \setminus \gamma_1B^{(2)}_{j_{i}} ,\quad
\tilde\Omega_2:=\mathcal G \setminus \gamma_1B^{(2)}_{j_{i}}.
\end{align*}
Then we have 
$$\tilde\Omega_1\subset \gamma_1B^{(2)}_{j_{i}}\cap \tilde\Omega_2,\quad 
\tilde\Omega_1=\tilde\Omega\setminus \left(\tilde\Omega\setminus \tilde\Omega_2\right). $$
Like in case (i), by Lemma \ref{lem up lw}  and \eqref{B2}, we can see that $[b, \mathcal R_{\ell} ]$ is not compact on $L^p_w(\mathcal G)$. Thus $b$ satisfies condition (ii) of Theorem \ref{thm-cmo}.

{\bf Case (iii).}  Assume that condition (iii) in Theorem \ref{thm-cmo} does not hold for $b$. Then there exists $\delta>0$ such that for any $r>0$, there exists $B\subset \mathcal G\setminus B(0,r)$ with $M(b,B)>\delta$.

We claim that for the $\delta$ above, there exists a sequence $\{B_j\}_j$ of balls such that for any $j$,
\begin{align}\label{mbb}
M(b, B_j)>\delta,
\end{align}
and for any $i\neq m$,
\begin{align}\label{bb}
\gamma_1B_i\cap \gamma_1B_m=\emptyset.
\end{align}
To see this, let $C_{\delta}>0$ to be determined later.  Then for $R_1>C_{\delta}$, there exists a ball $B_1:=B(g_1, r_1)\subset\mathcal G\setminus B(0,R_1)$ such that \eqref{mbb} holds. Similarly, for $R_j:=|g_{j-1}|+ 4\gamma_1C_{\delta}$, $j=2,3,\cdots,$ there exists $B_j:=B(g_j, r_j)\subset \mathcal G\setminus B(0, R_j)$ satisfying \eqref{mbb}. Repeating this procedure, we can obtain a sequence of balls $\{B_j\}_j$ with each $B_j$ satisfying \eqref{mbb}.
 Moreover, since $b$ satisfies condition (ii) in Theorem \ref{thm-cmo}, for the $\delta$ above, there exists a constant $\tilde C_\delta>0$ such that $M(b, B)<\delta$ for any ball $B$ satisfying $r_B>\tilde C_\delta$. This together with the choice of $\{B_j\}$ implies that $r_j\leq \tilde C_\delta=:C_\delta$ for all $j$. Therefore, for each $j$,
 $$\gamma_1r_j<\gamma_1C_\delta<4\gamma_1C_\delta.$$
Thus, for all $i\neq m$, without loss of generality, we may assume $i<m$, 
\begin{align*}
d(\gamma_1B_i, \gamma_1B_m)&\geq R_{i+1}-(|g_i|+\gamma_1r_i)-\gamma_1r_{i+1}
\geq 4\gamma_1C_\delta-2\gamma_1C_\delta=2\gamma_1C_\delta,
\end{align*}
which implies the claim.

We define
\begin{align*}
\hat \Omega_1:=\gamma_1B_{j}\setminus \beta_1B_{j},\quad
\hat \Omega_2:=\mathcal G\setminus \gamma_1B_{j+m}.
\end{align*}
Observe that $\hat \Omega_1\subset \hat \Omega_2$. Therefore, 
\begin{align*}
&\left\| [b, \mathcal R_{\ell} ](f_{j})- [b, \mathcal R_{\ell} ](f_{j+m})\right\|_{L^p_w(\mathcal G)}\\
&\geq \left( \int_{\hat\Omega_1}\left|  [b, \mathcal R_{\ell} ](f_j)(g)- [b, \mathcal R_{\ell} ](f_{j+m})(g)\right|^p w(g) dg \right)^{1\over p}\\
&\geq \left( \int_{\hat\Omega_1}\left|  [b, \mathcal R_{\ell} ](f_j)(g)\right|^p w(g) dg \right)^{1\over p} -
\left( \int_{\hat\Omega_2}\left|  [b, \mathcal R_{\ell} ](f_{j+m})(g)\right|^p w(g) dg \right)^{1\over p}\\
&=:\hat I_1-\hat I_2.
\end{align*}
By the similar estimates of $I_1$ and $I_2$ in case (i) and the definition of $\gamma_2$ in \eqref{i1}, we can deduce that $\hat I_1^p\geq \gamma_2^p$ and  $\hat I_2^p\leq ({\gamma_2\over 2})^p$. Consequently,
$$\left\| [b, \mathcal R_{\ell} ]f_{j}- [b, \mathcal R_{\ell} ]f_{j+m}\right\|_{L^p_w(\mathcal G)}\gtrsim \gamma_2,$$
which contadicts to the compactness of $[b, \mathcal R_\ell]$ on $L^p_w(\mathcal G)$, thereby $b$ also satisfies condition (iii) in Theorem \ref{thm-cmo}. This finishes the proof of the sufficiency of Theorem \ref{thm1}.

{\bf Necessary  condition:} Assume that $b\in {\rm VMO}(\mathcal G)$, we will show that $[b, \mathcal R_\ell]$ is compact on $L^p_w(\mathcal G)$.  
Since $b\in {\rm VMO}(\mathcal G)$, for any $\epsilon>0$, there exists $b_\epsilon\in  C_0^\infty(\mathcal G)$ such that 
$$\|b-b_\epsilon\|_{{\rm BMO}(\mathcal G)}<\epsilon.$$
By Lemma \ref {lem one weight}, we can see
\begin{align*}
\|[b,\mathcal R_\ell](f)-[b_\epsilon,\mathcal R_\ell](f)\|_{L^p_w(\mathcal G)}=
\|[b-b_\epsilon,\mathcal R_\ell](f)\|_{L^p_w(\mathcal G)}\lesssim  \|b-b_\epsilon\|_{{\rm BMO}(\mathcal G)}
\|f\|_{L^p_w(\mathcal G)}.
\end{align*}
Therefore,
$$\|[b,\mathcal R_\ell]-[b_\epsilon,\mathcal R_\ell]\|_{L^p_w(\mathcal G)\rightarrow L^p_w(\mathcal G)}
\lesssim  \|b-b_\epsilon\|_{{\rm BMO}(\mathcal G)}.$$
Thus, it suffices to show that $[b, \mathcal R_\ell]$ is a compact operator for $b\in C_0^\infty(\mathcal G)$.

Suppose $b\in C_0^\infty(\mathcal G)$, to show $[b, \mathcal R_\ell]$ is compact on $L^p_w(\mathcal G)$, it suffices to show that for every bounded subset $E\subset L^p_w(\mathcal G)$, the set $[b, \mathcal R_\ell] E$ is precompact. Thus, we only need to show that $[b, \mathcal R_\ell]E\subset L^p_w(\mathcal G)$ satisfies (i)-(iii) in Lemma \ref{lem r-k}. Firstly, by Lemma \ref{lem one weight}, $[b, \mathcal R_\ell] $ is bounded on $L^p_w(\mathcal G)$, which implies that $[b, \mathcal R_\ell] E$ satisfies (i) in Lemma \ref{lem r-k}.

Next we will show that $[b, \mathcal R_\ell] E$ satisfies (ii) in Lemma \ref{lem r-k}. We may assume that $b\in C_0^\infty(\mathcal G)$ with $\supp b\subset B(0,R)$.  Then for $t>2$, we have
\begin{align*}
&\left\|[b, \mathcal R_\ell]f\right\|_{L^p_w(\mathcal G\setminus B(0, tR))}\\
&=\left(\int_{\mathcal G\setminus B(0, tR)}\big|b(g) \mathcal R_\ell(f)(g) -\mathcal R_\ell(bf)(g)\big|^p w(g)dg\right)^{1\over p}\\
&\leq \left(\int_{\mathcal G\setminus B(0, tR)}\big|b \mathcal R_\ell(f)(g)\big|^p w(g)dg\right)^{1\over p}
+\left(\int_{\mathcal G\setminus B(0, tR)}\big|\mathcal R_\ell(bf)(g)\big|^p w(g)dg\right)^{1\over p}.
\end{align*}
Since $\supp b\subset B(0,R)$ and $B(0,R)\cap (\mathcal G\setminus B(0, tR))=\emptyset $,  the first term on the right hand side of the above inequality is zero. For $g\in \mathcal G\setminus B(0, tR)$, by \eqref{kj}, H\"older's inequality, the definition of $A_p$ weights and the fact that $b\in C_0^\infty(\mathcal G)$,  we have
\begin{align*}
\big|\mathcal R_\ell(bf)(g)\big|
&\leq \int_{\mathcal G}|K_\ell(g,g')|  |b(g')| |f(g')|dg'\lesssim \int_{B(0,R)}{1\over d(g,g')^Q}|b(g')| |f(g')|dg'\\
&\lesssim{1\over d(0,g)^Q}\int_{B(0,R)} |f(g')|w(g')^{1\over p} w(g')^{-{1\over p}}dg'\\
&\lesssim{1\over d(0,g)^Q}\|f\|_{L^p_w(\mathcal G)}
\left(\int_{B(0,R)} w(g')^{-{1\over p-1}}dg'\right)^{1-{1\over p}}\\
&\lesssim{1\over d(0,g)^Q}\|f\|_{L^p_w(\mathcal G)}{|B(0,R)|\over w(B(0,R))^{1\over p}}.
\end{align*}
Therefore,
\begin{align*}
&\left\|[b, \mathcal R_\ell]f\right\|_{L^p_w(\mathcal G\setminus B(0, tR))}
\lesssim \|f\|_{L^p_w(\mathcal G)}{|B(0,R)|\over w(B(0,R))^{1\over p}}
\left(\int_{\mathcal G\setminus B(0, tR)} {1\over d(0,g)^{Qp}}w(g)dg\right)^{1\over p}\\
&\lesssim \|f\|_{L^p_w(\mathcal G)}{|B(0,R)|\over w(B(0,R))^{1\over p}}
\left(\sum_{k=[\log_2 t]}^\infty\int_{2^{k+1}B(0,R)\setminus 2^kB(0,R)}{1\over d(0,g)^{Qp}}w(g)dg\right)^{1\over p}\\
&\lesssim \|f\|_{L^p_w(\mathcal G)}{|B(0,R)|\over w(B(0,R))^{1\over p}}
\left(\sum_{k=[\log_2 t]}^\infty{1\over (2^kR)^{Qp}}w\big(2^{k+1}B(0,R)\setminus 2^kB(0,R)\big)\right)^{1\over p}\\
&\lesssim \|f\|_{L^p_w(\mathcal G)}
\left(\sum_{k=[\log_2 t]}^\infty{1\over 2^{kQp}}{w\big(2^{k+1}B(0,R)\big)\over w(B(0,R))}\right)^{1\over p}\lesssim \|f\|_{L^p_w(\mathcal G)}\left(\sum_{k=[\log_2 t]}^\infty{1\over 2^{kQ\epsilon}}\right)^{1\over p}\\
&\lesssim \left({2^{Q\epsilon}\over 1-2^{-Q\epsilon}}\right)^{1\over p}\|f\|_{L^p_w(\mathcal G)} t^{-{Q\epsilon\over p}},
\end{align*}
which approches to zero as $t$ goes to infinity. This proves condition (ii) in Lemma \ref{lem r-k}.

At last, we prove that $[b, \mathcal R_\ell]E$ satisfies (iii) in Lemma \ref{lem r-k}. 
By a change of variables, we have
\begin{align*}
[b, \mathcal R_\ell]f(g)-\left([b,\mathcal R_\ell]f \right)_{B(g,r)}
&={1\over |B(g,r)|}\int_{B(g,r)} [b, \mathcal R_\ell]f(g)-[b, \mathcal R_\ell]f(g') dg'\\
&={1\over |B(0,r)|}\int_{B(0,r)} [b, \mathcal R_\ell]f(g)-[b, \mathcal R_\ell]f(\tilde gg) d\tilde g
\end{align*}

Take an arbitrary $\varepsilon\in(0,{1\over 2})$, $r\in\mathbb R_+$ and $\tilde g\in B(0,r)$. Then for any $g\in\mathcal G$,
\begin{align*}
& [b, \mathcal R_\ell]f(g)-[b, \mathcal R_\ell]f(\tilde gg)\\
&=\int_\mathcal G K_\ell(g,g') \left(b(g)-b(g')\right)f(g')dg'-\int_\mathcal G K_\ell(\tilde gg,g') \left(b(\tilde gg)-b(g')\right)f(g')dg'\\
&=\int_{d(g,g')>\varepsilon^{-1}d(\tilde g, 0)} K_\ell(g, g')\left(b(g)-b(\tilde g g)\right)f(g')dg'\\
&\quad +\int_{d(g,g')>\varepsilon^{-1}d(\tilde g, 0)}\left( K_\ell(g, g')-K_\ell(\tilde g g, g')\right)\left(b(\tilde gg)-b(g')\right)f(g')dg'\\
&\quad +\int_{d(g,g')\leq\varepsilon^{-1}d(\tilde g, 0)} K_\ell(g, g')\left(b(g)-b(g')\right)f(g')dg'\\
&\quad -\int_{d(g,g')\leq\varepsilon^{-1}d(\tilde g, 0)} K_\ell(\tilde gg, g')\left(b(\tilde g g)-b(g')\right)f(g')dg'\\
&=:L_1+L_2+L_3+L_4.
\end{align*}

Let us first consider the term $L_2$. By \eqref{kj}, we have
\begin{align*}
|L_2|&\leq \int_{d(g,g')>\varepsilon^{-1}d(\tilde g, 0)}\left| K_\ell(g, g')-K_\ell(\tilde g g, g')\right|\left|b(\tilde gg)-b(g')\right| |f(g')|dg' \\
&\lesssim d(\tilde g,0) \int_{d(g,g')>\varepsilon^{-1}d(\tilde g, 0)}{1\over d(g,g')^{Q+1}} |f(g')| dg'\\
&\lesssim d(\tilde g,0) \sum_{k=[\log_2\varepsilon^{-1}]}^\infty\int_{2^kd(\tilde g,0)<d(g,g')\leq 2^{k+1}d(\tilde g,0)}
{1\over d(g,g')^{Q+1}} |f(g')| dg'\\
&\lesssim d(\tilde g,0)\sum_{k=[\log_2\varepsilon^{-1}]}^\infty {1\over (2^kd(\tilde g, 0))^{Q+1}}\int_{d(g,g')\leq 2^{k+1}d(\tilde g,0)}|f(g')|dg'\\
&\lesssim \sum_{k=[\log_2\varepsilon^{-1}]}^\infty{1\over 2^k} M(f)(g)\\
&\lesssim \varepsilon M(f)(g),
\end{align*}
where $M(f)$ is the Hardy-Littlewood maximal operator on $\mathcal G$.

For $L_3$, by the mean value theorem and \eqref{kj}, we have
\begin{align*}
|L_3|&=\left|\int_{d(g,g')\leq\varepsilon^{-1}d(\tilde g, 0)} K_\ell(g, g')\left(b(g)-b(g')\right)f(g')dg'\right|\\
&\lesssim \int_{d(g,g')\leq\varepsilon^{-1}d(\tilde g, 0)}{1\over d(g,g')^{Q-1}}|f(g')|dg'\\
&\lesssim \sum_{k=-\infty}^{-1}\int_{2^k\varepsilon^{-1}d(\tilde g, 0)<d(g,g')\leq 2^{k+1}\varepsilon^{-1}d(\tilde g, 0)}
{1\over d(g,g')^{Q-1}}|f(g')|dg'\\
&\lesssim\sum_{k=-\infty}^{-1}{1\over (2^k\varepsilon^{-1}d(\tilde g, 0))^{Q-1}}\int_{d(g,g')\leq 2^{k+1}d(\tilde g, 0)}
|f(g')|dg'\\
&\lesssim \varepsilon^{-1}d(\tilde g, 0)M(f)(g)\sum_{k=-\infty}^{-1}2^k \lesssim\varepsilon^{-1}d(\tilde g, 0)M(f)(g).
\end{align*}

For $L_4$, again by \eqref{kj}, we can obtain
\begin{align*}
|L_4|&=\left| \int_{d(g,g')\leq\varepsilon^{-1}d(\tilde g, 0)} K_\ell(\tilde gg, g')\left(b(\tilde g g)-b(g')\right)f(g')dg' \right|\\
&\lesssim \int_{d(g,g')\leq\varepsilon^{-1}d(\tilde g, 0)} {d(\tilde gg,g')\over d(\tilde g g, g')^Q}|f(g')|dg' \lesssim\int_{d(\tilde g g, g')\leq\varepsilon^{-1}d(\tilde g, 0)+d(\tilde g, 0)} {1\over d(\tilde g g, g')^{Q-1}}|f(g')|dg' \\
&\lesssim \left( \varepsilon^{-1}d(\tilde g, 0)+d(\tilde g, 0)\right) M(f)(g)\\
&\lesssim \varepsilon^{-1}d(\tilde g, 0)M(f)(g).
\end{align*}

For $L_1$, 
\begin{align*}
|L_1|&=\left|\int_{d(g,g')>\varepsilon^{-1}d(\tilde g, 0)} K_\ell(g, g')\left(b(g)-b(\tilde g g)\right)f(g')dg' \right|\\
&=\left|b(g)-b(\tilde g g) \right|\left|\int_{d(g,g')>\varepsilon^{-1}d(\tilde g, 0)} K_\ell(g, g')f(g')dg' \right|\\
&\leq\left|b(g)-b(\tilde g g) \right|\sup_{t>0}\left|\int_{d(g,g')>t} K_\ell(g, g')f(g')dg' \right|
=\left|b(g)-b(\tilde g g) \right|\mathcal R^*_\ell(f)(g)\\
&\lesssim d(\tilde g, 0) \mathcal R^*_\ell(f)(g).
\end{align*}
Then by all the above estimates, we have
\begin{align*}
&\left( \int_{\mathcal G} \left|[b, \mathcal R_\ell]f(g)-\left([b,\mathcal G_\ell]f \right)_{B(g,r)} \right|^p w(g)dg\right)^{1\over p}\\
&=\left( \int_{\mathcal G} \left|{1\over |B(0,r)|}\int_{B(0,r)} [b, \mathcal R_\ell]f(g)-[b, \mathcal R_\ell]f(\tilde gg) d\tilde g \right|^p w(g)dg\right)^{1\over p}\\
&\leq \left( \int_{\mathcal G}{1\over |B(0,r)|^p}\left(\int_{B(0,r)}\left(|L_1| +|L_2|+|L_3|+|L_4|\right)d\tilde g\right)^{p} w(g)dg\right)^{1\over p}\\
 &\lesssim \left(\int_\mathcal G\big( r\mathcal R^*_\ell(f)(g)+\left( \varepsilon+\varepsilon^{-1}r\right)M(f)(g)\big)^p  w(g)dg\right)^{1\over p}\\
 &\lesssim r\left( \int_\mathcal G \left(R^*_\ell(f)(g)\right)^p w(g)dg\right)^{1\over p}+
 \left( \varepsilon+\varepsilon^{-1}r\right)\left( \int_\mathcal G \left(M(f)(g)\right)^p w(g)dg\right)^{1\over p}\\
 &\lesssim r\|f\|_{L^p_w(\mathcal G)}+ \left( \varepsilon+\varepsilon^{-1}r\right)\|f\|_{L^p_w(\mathcal G)},
\end{align*}
where the first term of the last inequality comes from \cite[Theorem 1.3]{HYY} together with \cite[Theorem 9.4.5]{Gr}.
Thus if we take $r<\varepsilon^2$, then 
$$\left( \int_{\mathcal G} \left|[b, \mathcal R_\ell]f(g)-\left([b,\mathcal G_\ell]f \right)_{B(g,r)} \right|^p w(g)dg\right)^{1\over p}\lesssim \varepsilon.$$
This shows that $[b, \mathcal R_\ell]E$ satisfies condition (iii) in Lemma \ref{lem r-k}. Hence, $[b, \mathcal R_\ell]$ is a compact operator. This finishes the proof of Theorem \ref{thm1}. 
\end{proof}


\section{Appendix: \  characterisation of ${\rm VMO}(\mathcal G)$}

In this section, we provide the characterisation of VMO space on stratified Lie groups
by giving the proof of Theorem \ref {thm-cmo}. We point out that the main frame of the  proof is similar to 
that in the Euclidean spaces from \cite{U1}. However, the technique in the proof in \cite{U1} 
there depends heavily on the decomposition cubes, and  on general stratified Lie groups, there is no such convenient tools. We have balls with respect to the metric instead of the cubes. Hence, the main contribution of our proof of Theorem \ref {thm-cmo} is to use balls to replace cubes in the Euclidean setting, which relies on the fact that the metric here is geometrically doubling and gives rise to the technique of coverings. 

We point out that our proof is written for stratified Lie groups but it also works for general space of homogeneous type in the sense of Coifman and Weiss \cite{CW} with the modification that we change $C_0^\infty(\mathcal G)$ to the Lipschitz function space on space of homogeneous type.

\begin{proof}[Proof of Theorem \ref{thm-cmo}]
In the following, for any integer $m$, we use $B^m$ to denote the ball $B(0, 2^m)$.

{\bf Necessary condition:} Assume that $f\in{\rm VMO}(\mathcal G)$. If $f\in C_0^\infty(\mathcal G)$, then (i)-(iii) hold. In fact, by the uniform continuity, $f$ satisfies (i).  Since $f\in L^1(\mathcal G)$,  $f$ satisfies (ii). By the fact that $f$ is compactly supported, $f$ satisfies (iii). If $f\in{\rm VMO}(\mathcal G)\setminus  C_0^\infty(\mathcal G)$, by definition, for any given $\varepsilon>0$, there exists $f_\varepsilon\in C_0^\infty(\mathcal G)$ such that $\|f-f_\varepsilon\|_{{\rm BMO}(\mathcal G)}<\varepsilon$. Since $f_\varepsilon$ satisfies (i)-(iii), by the triangle inequality of ${\rm BMO}(\mathcal G)$ norm, we can see (i)-(iii) hold for $f$.

{\bf Sufficient condition:}  In this proof for $j=1,2,\cdots, 8$, 
 the value 
$\alpha_j$ is a positive constant depending only on $Q$ and $\alpha_i$ for $1\leq i<j$. Assume that $f\in {\rm BMO}(\mathcal G)$ and satisfies (i)-(iii). To prove that $f\in{\rm VMO}(\mathcal G)$, it suffices to show that there exist  positive constants $\alpha_1$,  $\alpha_2$ 
 such that, for any $\varepsilon>0$, there exists $\phi_\varepsilon\in {\rm BMO}(\mathcal G)$ satisfying  
\begin{align}\label{g-h}
\inf_{h\in C_0^\infty(\mathcal G)}\|\phi_\varepsilon-h\|_{{\rm BMO}(\mathcal G)}<\alpha_1\varepsilon,
\end{align}
and
\begin{align}\label{g-f}
\|\phi_\varepsilon-f\|_{{\rm BMO}(\mathcal G)}<\alpha_2\varepsilon.
\end{align}

By (i), there exist $i_\varepsilon\in\mathbb N$ such that
\begin{align}\label{ieps}
\sup\left\{M(f,B): r_B\leq 2^{-i_\varepsilon+4}\right\}<\varepsilon.
\end{align}
By (iii), there exists $j_\varepsilon\in\mathbb N$ such that
\begin{align}\label{jeps}
\sup\left\{M(f,B): B\cap B^{j_\varepsilon}=\emptyset\right\}<\varepsilon.
\end{align}

We first establish a cover of $\mathcal G$.
 Observe that
\begin{align*}
B^{j_\varepsilon}=B^{-i_\varepsilon}\bigcup\left(\bigcup_{\nu=1}^{2^{j_\varepsilon+i_\varepsilon}-1}
B\left(0, (\nu+1)2^{-i_\varepsilon}\right)\setminus B\left(0, \nu2^{-i_\varepsilon}\right)
\right)=:\bigcup_{\nu=0}^{2^{j_\varepsilon+i_\varepsilon}-1}\mathcal R^{j_\varepsilon}_{\nu,-i_\varepsilon}
\end{align*}
For $m>j_\varepsilon$,
\begin{align*}\begin{split}
B^{m}\setminus B^{m-1}& =\bigcup_{\nu=0}^{2^{j_\varepsilon+i_\varepsilon-1}-1}
B\left(0, 2^{m-1}+ (\nu+1)2^{m-j_\varepsilon-i_\varepsilon}\right)\setminus B\left(0, 2^{m-1}+\nu2^{m-j_\varepsilon-i_\varepsilon}
\right)\\
&=:\bigcup_{\nu=0}^{2^{j_\varepsilon+i_\varepsilon-1}-1}\mathcal R^{m}_{\nu,m-j_\varepsilon-i_\varepsilon}.
\end{split}\end{align*}
For each $\mathcal R^{j_\varepsilon}_{\nu,-i_\varepsilon}$, $\nu=1,2,\cdots, 2^{j_\varepsilon+i_\varepsilon}-1$, let
$\tilde {\mathcal B}^{j_\varepsilon}_{\nu,-i_\varepsilon}$ be an open cover of $\mathcal R^{j_\varepsilon}_{\nu,-i_\varepsilon}$ consisting of open balls with radius $2^{-i_\varepsilon}$ and center on the sphere $S(0, (\nu+2^{-1})2^{-i_\varepsilon})$. Let $\mathcal B^{j_\varepsilon}_{0,-i_\varepsilon}=\{B(0,2^{-i_\varepsilon})\}$ and $\mathcal B^{j_\varepsilon}_{\nu,-i_\varepsilon}$ be the finite subcover of $\tilde {\mathcal B}^{j_\varepsilon}_{\nu,-i_\varepsilon}$. Similarly, for each $m>j_\varepsilon$ and $\nu=0,1,\cdots, 2^{j_\varepsilon+i_\varepsilon-1}-1$, let $\mathcal B^{m}_{\nu,m-j_\varepsilon-i_\varepsilon}$ be the finite cover of $\mathcal R^{m}_{\nu,m-j_\varepsilon-i_\varepsilon}$ consisting of open balls with radius $2^{m-j_\varepsilon-i_\varepsilon}$ and center on the sphere $S(0, (2^{m-1}+ (\nu+2^{-1})2^{m-j_\varepsilon-i_\varepsilon})$.

We define $B_g$ as follows.  If $g\in B^{j_\varepsilon}$, then there is $\nu\in\{0,1,\cdots, 2^{j_\varepsilon+i_\varepsilon}-1\}$ such that $g\in\mathcal R^{j_\varepsilon}_{\nu,-i_\varepsilon}$, let $B_g$ be a ball in $\mathcal B^{j_\varepsilon}_{\nu,-i_\varepsilon}$ that contains $g$. If $g\in B^m\setminus B^{m-1}$,  $m> j_\varepsilon $, then there is  $\nu\in\{ 0, 1, \cdots,  2^{j_\varepsilon+i_\varepsilon-1}-1\}$ such that $g\in \mathcal R^{m}_{\nu,m-j_\varepsilon-i_\varepsilon}$,  let $B_g$ be a ball in $\mathcal B^{m}_{\nu,m-j_\varepsilon-i_\varepsilon}$ that contains $g$.  We can see that if $\overline B_g\cap \overline B_{g'}\neq\emptyset$, then
\begin{align}\label{rbxy}
{\rm either}\ \  r_{B_g}\leq 2 ~ r_{B_{g'}}\ \  {\rm or}\ \  r_{B_{g'}}\leq 2~ r_{B_g}.
\end{align}
In fact, if $r_{B_g}>2r_{B_{g'}}$, then there is $m_0\in\mathbb N$ such that $g\in B^{m_0+2}\setminus B^{m_0+1}$ and $g'\in B^{m_0}$, thus 
$$d(g,g')\geq d(0,g)-d(0,g')\geq2^{m_0+1}-2^{m_0}>2^{m_0+2-j_\varepsilon-i_\varepsilon}+2^{m_0-j_\varepsilon-i_\varepsilon}=r_{B_g}+r_{B_{g'}},$$
 which is contradict to the fact that $\overline B_g\cap \overline B_{g'}\neq\emptyset$.

Now we define $\phi_\varepsilon$.
By (ii), there exists $m_\varepsilon>j_\varepsilon$ large enough such that when $r_B>2^{m_\varepsilon-i_\varepsilon-j_\varepsilon}$, we have
\begin{align}\label{lmeps}
M(f, B)<2^{Q(-i_\varepsilon-j_\varepsilon-1)-1}\varepsilon.
\end{align} 
Define
\begin{align*}
 \phi_\varepsilon(g)= 
 \begin{cases}
f_{B_g}, &{\rm if}\ \  g\in B^{m_\varepsilon},\\
 f_{B^{m_\varepsilon}\setminus B^{m_\varepsilon-1}},  \quad &{\rm if}\ \ g\in \mathcal G\setminus B^{m_\varepsilon},
 \end{cases} 
\end{align*}
where $f_B$ is defined in \eqref{fave}.

We claim that there exists a positive constant $\alpha_3, \alpha_4$ such that if $\overline B_g\cap \overline B_{g'}\neq\emptyset$ or $g,g'\in\mathcal G\setminus B^{m_\varepsilon-1}$, then
\begin{align}\label{gx-gy}
\left|\phi_{\varepsilon}(g)-\phi_{\varepsilon}(g') \right|<\alpha_3 \varepsilon.
\end{align}
And 
if $2B_{g}\cap 2B_{g'}\neq \emptyset$, then for any $g_1\in B_{g}$, $g_2\in B_{g'}$, we have 
\begin{align}\label{gx-gy1}
\left|\phi_{\varepsilon}(g_1)-\phi_{\varepsilon}(g_2) \right|<\alpha_4 \varepsilon.
\end{align}
Assume \eqref{gx-gy} and \eqref{gx-gy1} at the moment, we now continue to prove   the sufficiency  of  Theorem \ref{thm-cmo}.

Now we show \eqref{g-h}. Let
$\tilde h_{\varepsilon}(g):=\phi_{\varepsilon}(g)-f_{B^{m_\varepsilon}\setminus B^{m_\varepsilon-1}}.$
By definition of $\phi_{\varepsilon}$, we can see that
$\tilde h_{\varepsilon}(g)=0$ for $g\in\mathcal G\setminus B^{m_\varepsilon}$ and 
$\|\tilde h_{\varepsilon}-\phi_{\varepsilon}\|_{BMO(\mathcal G)}=0.$

Observe that $\supp (\tilde h_\varepsilon)\subset B^{m_\varepsilon}$ and there exists a function $h_\varepsilon\in C_c(\mathcal G)$ such that for any $g\in\mathcal G$,
$|\tilde h_{\varepsilon}(g)-h_{\varepsilon}(g)|<\varepsilon.$
Let $\psi\in C_0^\infty(\mathcal G)$ be a positive valued function with $\int_{\mathcal G}\psi=1$, then by \cite[Proposition 1.20]{FS}, $\psi_t\ast h_\varepsilon(g)$ approaches to  $h_\varepsilon(g)$ uniformly for $g\in\mathcal G$ as $t$ goes to $0$. Since
\begin{align*}
\|\psi_t\ast h_\varepsilon-\phi_\varepsilon\|_{{\rm BMO}(\mathcal G)}
&\leq \|\psi_t\ast h_\varepsilon-h_\varepsilon\|_{{\rm BMO}(\mathcal G)}
+\|h_\varepsilon-\tilde h_\varepsilon\|_{{\rm BMO}(\mathcal G)}+
\| \tilde h_\varepsilon-\phi_\varepsilon\|_{{\rm BMO}(\mathcal G)}\\
&\leq \|\psi_t\ast h_\varepsilon-h_\varepsilon\|_{{\rm BMO}(\mathcal G)}+2\varepsilon,
\end{align*}
we can obtain \eqref{g-h} by letting $t$ go to $0$ and by taking $\alpha_1=2$.

Now we show \eqref{g-f}. To this end, we only need to prove that for any ball $B\subset\mathcal G$,
$$M(f-\phi_\varepsilon, B)<\alpha_2 \varepsilon.$$
We first prove that for every $B_g$ with $g\in B^{m_\varepsilon}$,
\begin{equation}\label{Mfgx}
\int_{B_g}\left|f(g')-\phi_{\varepsilon}(g')\right|dg'\leq \alpha_5 \varepsilon |B_g|.
\end{equation}

In fact,
\begin{align*}
\int_{B_g}\left|f(g')-\phi_{\varepsilon}(g')\right|dg'
&=\int_{B_g\cap B^{m_\epsilon}}|f(g')-f_{B_{g'}}|dg'+ \int_{B_g\cap (\mathcal G\setminus B^{m_\epsilon})}|f(g')-f_{B^{m_\varepsilon}\setminus B^{m_\varepsilon-1}}|dg'.
\end{align*}

When 
 $g\in B(0, 2^{m_\varepsilon}-2^{m_\varepsilon-i_\varepsilon-j_\varepsilon})$, then 
$B_g\subset B^{m_\epsilon}$, thus
\begin{align*}
\int_{B_g}\left|f(g')-\phi_{\varepsilon}(g')\right|dg'&=\int_{B_g}|f(g')-f_{B_{g'}}|dg'\leq\int_{B_g}|f(g')-f_{B_g}|dg'+\int_{B_g}|f_{B_g}-f_{B_{g'}}|dg'\\
&= |B_g| M(f, B_g)+\int_{B_g}|f_{B_g}-f_{B_{g'}}|dg'.
\end{align*}
Note that if $g'\in B_g$, then $B_g\cap B_{g'}\neq\emptyset$.
Therefore, If $B_g\cap B^{j_\varepsilon}=\emptyset$, by \eqref{jeps} and \eqref {gx-gy}, we have
$$\int_{B_g}\left|f(g')-\phi_{\varepsilon}(g')\right|dg'<(\varepsilon+\alpha_3\varepsilon)|B_g|.$$
If $B_g\cap B^{j_\varepsilon}\neq\emptyset$, then $r_{B_g}\leq 2^{-i_\varepsilon+1}$, then by \eqref{ieps} and \eqref{gx-gy},
$$\int_{B_g}\left|f(g')-\phi_{\varepsilon}(g')\right|dg'<(\varepsilon+\alpha_3\varepsilon)|B_g|.$$

When $g\in B^{m_\varepsilon}\setminus B(0, 2^{m_\varepsilon}-2^{m_\varepsilon-j_\varepsilon-i_\varepsilon})$, it is clear that $B_g\cap B^{j_\varepsilon}=\emptyset$, then by \eqref{jeps}, \eqref{lmeps} and \eqref{gx-gy}, we have
\begin{align*}
&\int_{B_g}\left|f(g')-\phi_{\varepsilon}(g')\right|dg'\\
&\leq\int_{B_g\cap B^{m_\epsilon}}|f(g')-f_{B_g}|dg'+ \int_{B_g\cap B^{m_\epsilon}}|f_{B_g}-f_{B_{g'}}|dg'\\
&\quad+\int_{B_g\cap (\mathcal G\setminus B^{m_\epsilon})}|f(g')-f_{B^{m_\varepsilon+1}}|dg'+\int_{B_g\cap (\mathcal G\setminus B^{m_\epsilon})}|f_{B^{m_\varepsilon+1}}-f_{B^{m_\varepsilon}\setminus B^{m_\varepsilon-1}}|dg'\\
&\leq |B_g|M(f,B_g)+\alpha_3\varepsilon|B_g|+|B^{m_\varepsilon+1}|M(f,B^{m_\varepsilon+1})
+{|B^{m_\varepsilon+1}| |B_g|\over | B^{m_\varepsilon}\setminus B^{m_\varepsilon-1}|}M(f,B^{m_\varepsilon+1})\\
&<(2\varepsilon+\alpha_3\varepsilon)|B_g|.
\end{align*}
Then \eqref {Mfgx} holds by taking $\alpha_5=(2+\alpha_3)$.

Let $B$ be an arbitrary ball in $\mathcal G$, then
$M(f-\phi_\varepsilon, B)\leq M(f, B) + M(\phi_\varepsilon, B). $
If $B\subset B^{m_\varepsilon}$ and $\max\{r_{B_g}: B_g\cap B\neq\emptyset\}>8r_B$, then  
\begin{align}\label{min}
\min\{r_{B_g}: B_g\cap B\neq \emptyset\}>2r_B.
\end{align}

In fact, assume that $r_{B_{g_0}}=\max\{r_{B_g}: B_g\cap B\neq\emptyset\}$ and $g_0\in B^{l_0}\setminus B^{l_0-1}$ for some $l_0\in\mathbb Z$. 
Then $B\subset B^{l_0}\cap {3\over 2}B_{g_0}$.
If $l_0\leq j_\varepsilon$, then \eqref{min} holds.
If $l_0> j_\varepsilon$, then $r_{B_{g_0}}=2^{l_0-j_\varepsilon-i_\varepsilon}$, and
$$r_B<{1\over 8}r_{B_{g_0}}=2^{l_0-j_\varepsilon-i_\varepsilon-3}.$$
Since for any $g'\in{3\over 2}B_{g_0}$,
\begin{align*}
d(0,g')&\geq d(0,g_0)-d(g_0, g')\geq2^{l_0-1}-{3\over 2}2^{l_0-j_\varepsilon-i_\varepsilon}
>2^{l_0-1}-2^{l_0-j_\varepsilon-i_\varepsilon+1},
\end{align*}
we have
$$\operatorname{dist}(0,{3\over 2}B_{g_0}):=\inf_{g'\in {3\over 2}B_{g_0}}d(0,g')>2^{l_0-1}-2^{l_0-j_\varepsilon-i_\varepsilon+1}.$$
Thus $B\subset B^{l_0}\setminus {3\over 2}B^{l_0-2}$. Therefore, if $B_g\cap B\neq\emptyset$, then $g\in B^{l_0}\setminus B^{l_0-2}$, which implies that
$r_{B_g}\geq2^{l_0-2-j_\varepsilon-i_\varepsilon}>2r_B.$

From \eqref{min} we can see that  if $B_{g_i}\cap B\neq \emptyset$ and $B_{g_j}\cap B\neq \emptyset$, then $2B_{g_i}\cap 2B_{g_j}\neq \emptyset$.
Then by \eqref{gx-gy1}, we can get
\begin{align*}
M(\phi_\varepsilon, B)
&\leq{1\over |B|}\int_B {1\over |B|}\int_B \left| \phi_\varepsilon(g)- \phi_\varepsilon(g') \right|dg'dg\\
&= {1\over |B|^2}\sum_{i:B_{g_i}\cap B\neq\emptyset}\int_{B_{g_i}\cap B}
\sum_{j:B_{g_j}\cap B\neq\emptyset}\int_{B_{g_j}\cap B}\left| \phi_\varepsilon(g)- \phi_\varepsilon(g') \right|dg'dg\\
&<\alpha_4\varepsilon{1\over |B|^2}\left(\sum_{i:B_{g_i}\cap B\neq\emptyset}\left|B_{g_i}\cap B\right|\right)
\left(\sum_{i:B_{g_j}\cap B\neq\emptyset}\left|B_{g_j}\cap B\right|\right)<\alpha_4\alpha_6^2\varepsilon.
\end{align*}
Moreover, if $B\cap B^{j_\epsilon}\neq\emptyset$, then by \eqref{min}, $r_B<2^{-i_\varepsilon}$, thus by \eqref{ieps}, we have
$M(f, B)<\varepsilon.$
If $B\cap B^{j_\epsilon}=\emptyset$, then by \eqref{jeps}, $M(f, B)<\varepsilon.$
Consequently,
$$M(f-\phi_\varepsilon, B)\leq M(f, B) + M(\phi_\varepsilon, B)<\left(1+\alpha_4\alpha_6^2\right)\varepsilon.$$

If $B\subset B^{m_\varepsilon}$ and $\max\{r_{B_g}: B_g\cap B\neq\emptyset\}\leq 8r_B$, 
since the number of $B_g$ with $g\in B^{m_\varepsilon}$ that covers $B$ is bounded by $\alpha_7$,
 by  \eqref{Mfgx},  we have
\begin{align*}
M(f-\phi_\varepsilon, B)
&\leq {2\over |B|}\int_B \left|f(g)-\phi_\varepsilon(g)\right|dg\leq {2\over |B|} \sum_{i:B_{g_i}\cap B\neq\emptyset}\int_{B_{g_i}} \left|f(g)-\phi_\varepsilon(g)\right|dg\\
&\leq {2\over |B|}\alpha_5\varepsilon \sum_{i:B_{g_i}\cap B\neq\emptyset}\left|B_{g_i}\right|\leq {2\over |B|} \alpha_5\alpha_7\varepsilon|8B|=2^{3Q+1}\alpha_5\alpha_7\varepsilon.
\end{align*}

If $B\subset \mathcal G\setminus B^{m_\varepsilon-1}$, then $B\cap B^{j_\varepsilon}=\emptyset$, from \eqref{jeps} we can see $M(f, B)<\varepsilon$. By \eqref{gx-gy},
\begin{align*}
M(\phi_\varepsilon, B)
\leq{1\over |B|}\int_B {1\over |B|}\int_B \left|\phi_\varepsilon(g)-\phi_\varepsilon(g') \right|dg'dg
<\alpha_3\varepsilon.
\end{align*}
Therefore,
$$M(f-\phi_\varepsilon, B)\leq M(f, B)+M(\phi_\varepsilon, B)<(1+\alpha_3)\varepsilon.$$

If $B\cap (\mathcal G\setminus B^{m_\varepsilon})\neq \emptyset$ and $B\cap B^{m_\varepsilon-1}\neq \emptyset$. Let $p_{\scriptscriptstyle B}$ be the smallest integer such that $B\subset B^{p_{\scriptscriptstyle B}}$, then $p_{\scriptscriptstyle B}>m_\varepsilon$.
If $p_{\scriptscriptstyle B}=m_\varepsilon+1$, then 
$r_B>{1\over 2}(2^{m_\varepsilon}-2^{m_\varepsilon-1})=2^{m_\varepsilon-2}$. 
If $p_{\scriptscriptstyle B}>m_\varepsilon+1$, then
$r_B>{1\over 2}(2^{p_{\scriptscriptstyle B}-1}-2^{m_\varepsilon-1})$. Thus 
$${|B^{p_{\scriptscriptstyle B}}|\over |B|}\leq 2^{3Q}.$$
Therefore,
\begin{align*}
M(f-\phi_\varepsilon, B)
&\leq{1\over |B|}\int_B \left|f(g)-\phi_\varepsilon(g)-(f-\phi_\varepsilon)_{B^{p_{\scriptscriptstyle B}}} \right|dg
+\left| (f-\phi_\varepsilon)_{B^{p_{\scriptscriptstyle B}}}-(f-\phi_\varepsilon)_{B}\right|\\
&\leq2{|B^{p_{\scriptscriptstyle B}}|\over |B|}{1\over |B^{p_{\scriptscriptstyle B}}|}\int_{B^{p_{\scriptscriptstyle B}}} \left|f(g)-\phi_\varepsilon(g)-(f-\phi_\varepsilon)_{B^{p_{\scriptscriptstyle B}}} \right|dg\\
&\leq 2^{3Q+1}\left(M(f, B^{p_{\scriptscriptstyle B}})+M(\phi_\varepsilon, B^{p_{\scriptscriptstyle B}}) \right)\leq 2^{3Q+1}\left(\varepsilon+M(\phi_\varepsilon, B^{p_{\scriptscriptstyle B}}) \right),
\end{align*}
where the last inequality comes from \eqref{lmeps}.
By definition,
\begin{align*}
M(\phi_\varepsilon, B^{p_{\scriptscriptstyle B}})
&\leq {1\over |B^{p_{\scriptscriptstyle B}}|}\int_{B^{p_{\scriptscriptstyle B}}} \left|\phi_\varepsilon(g)-(\phi_\varepsilon)_{B^{p_{\scriptscriptstyle B}}\setminus B^{m_\varepsilon}} \right|dg
+\left|(\phi_\varepsilon)_{B^{p_{\scriptscriptstyle B}}\setminus B^{m_\varepsilon}}- (\phi_\varepsilon)_{B^{p_{\scriptscriptstyle B}}}\right|\\
&\leq{2\over |B^{p_{\scriptscriptstyle B}}|}\int_{B^{p_{\scriptscriptstyle B}}} \left|\phi_\varepsilon(g)-(\phi_\varepsilon)_{B^{p_{\scriptscriptstyle B}}\setminus B^{m_\varepsilon}} \right|dg.
\end{align*}
By \eqref{jeps}, \eqref{Mfgx} and the fact that $\phi_\varepsilon(g)=f_{B^{m_\varepsilon}\setminus B^{m_\varepsilon-1}}$ if $g\in\mathcal G\setminus B^{m_\varepsilon}$, we have
\begin{align*}
&\int_{B^{p_{\scriptscriptstyle B}}} \left|\phi_\varepsilon(g)-(\phi_\varepsilon)_{B^{p_{\scriptscriptstyle B}}\setminus B^{m_\varepsilon}} \right|dg\leq \int_{B^{p_{\scriptscriptstyle B}}}{1\over |B^{p_{\scriptscriptstyle B}}\setminus B^{m_\varepsilon}|}\int_{B^{p_{\scriptscriptstyle B}}\setminus B^{m_\varepsilon}}
|\phi_\varepsilon(g)-\phi_\varepsilon(g')|dg'dg\\
&= \int_{B^{m_\varepsilon}}
|\phi_\varepsilon(g)-f_{B^{m_\varepsilon}\setminus B^{m_\varepsilon-1}}|dg\\
&\leq \int_{B^{m_\varepsilon}}
|\phi_\varepsilon(g)-f(g)|dg+\int_{B^{m_\varepsilon}}
|f(g)-f_{B^{m_\varepsilon}}|dg+|B^{m_\varepsilon}||f_{B^{m_\varepsilon}}-f_{B^{m_\varepsilon}\setminus B^{m_\varepsilon-1}}|\\ 
&\leq \sum_{i:B_{g_i}\cap B^{m_\varepsilon}\neq\emptyset, g_i\in B^{m_\varepsilon}}
\int_{B_{g_i}}
|\phi_\varepsilon(g)-f(g)|dg+\left(|B^{m_\varepsilon}|+{|B^{m_\varepsilon}|^2\over |B^{m_\varepsilon}\setminus B^{m_\varepsilon-1}| }\right)M(f, B^{m_\varepsilon})\\
&<\alpha_5\varepsilon\sum_{i:B_{g_i}\cap B^{m_\varepsilon}\neq\emptyset, g_i\in B^{m_\varepsilon}}|B_{g_i}|+3\varepsilon\left|B^{m_\varepsilon}\right|<(\alpha_5\alpha_8+3)\varepsilon\left|B^{m_\varepsilon}\right|.
\end{align*}
Therefore,
\begin{align*}
M(f-\phi_\varepsilon, B)&\leq 2^{3Q+1}\left(\varepsilon+M(\phi_\varepsilon, B^{p_{\scriptscriptstyle B}}) \right)\leq2^{3Q+1}\left(\varepsilon+{2|B^{m_\varepsilon}|\over |B^{p_{\scriptscriptstyle B}}|} (\alpha_5\alpha_8+3)\varepsilon\right)\\
&<2^{3Q+1}\left( 2\alpha_5\alpha_8+7\right)\varepsilon.
\end{align*}
Then \eqref{g-f} holds by taking $\alpha_2=\max\{1+\alpha_4\alpha_6^2, 1+\alpha_3, 2^{3Q+1}( 2\alpha_5\alpha_8+7)\}$. This finishes the proof of Theorem \ref {thm-cmo}.

%

\medskip
{\it Proof of \eqref{gx-gy}:}
\medskip


 We first claim that 
\begin{align}\label{tgx-gy}
\sup\left\{\left|f_{B_g}-f_{B_{g'}}\right|: g, g'\in B^{m_\varepsilon}\setminus B^{m_\varepsilon-1} \right\}<\varepsilon.
\end{align}


By \eqref{lmeps}, for any $g\in B^{m_\varepsilon}\setminus B^{m_\varepsilon-1}$, we have
\begin{align*}
\left| f_{B_g}-f_{B^{m_{\varepsilon}+1}}\right|
&\leq
{|B^{m_{\varepsilon}+1}|\over |B_g|}{1\over |B^{m_{\varepsilon}+1}|}\int_{B^{m_{\varepsilon}+1}}\left|f(g')-f_{B^{m_{\varepsilon}+1}}\right|dg'\\
&={2^{Q(m_\varepsilon+1)}\over 2^{Q(m_\varepsilon-j_\varepsilon-i_\varepsilon)}}M(f, B^{m_\varepsilon +1})
<{\varepsilon\over 2}.
\end{align*}
Similarly, for any $g'\in B^{m_\varepsilon}\setminus B^{m_\varepsilon-1}$,
$|f_{B_{g'}}-f_{B^{m_{\varepsilon}+1}}|<{\varepsilon\over 2}.$
Consequently, \eqref{tgx-gy} holds.

For the case $g, g'\in\mathcal G\setminus B^{m_\varepsilon-1}$, firstly, if $g, g'\in\mathcal G\setminus B^{m_\varepsilon}$, then by definition 
$$\left|\phi_{\varepsilon}(g)-\phi_{\varepsilon}(g') \right|=0.$$
Secondly, if $g,g'\in B^{m_\varepsilon}\setminus B^{m_\varepsilon-1}$, then by \eqref{tgx-gy}, we have
$$\left|\phi_{\varepsilon}(g)-\phi_{\varepsilon}(g') \right|<\varepsilon.$$
Thirdly, without loss of generality, we may assume that $g\in B^{m_\varepsilon}\setminus B^{m_\varepsilon-1}$ and $g'\in\mathcal G\setminus B^{m_\varepsilon}$, then by \eqref{lmeps}, we have
\begin{align*}
\left|\phi_{\varepsilon}(g)-\phi_{\varepsilon}(g') \right|
&=\left|f_{B_g}-f_{B^{m_\varepsilon}\setminus B^{m_\varepsilon-1}}  \right|\leq \left|f_{B_g}-f_{B^{m_\varepsilon+1}}  \right|+\left|f_{B^{m_\varepsilon+1}}-f_{B^{m_\varepsilon}\setminus B^{m_\varepsilon-1}}  \right|\\
&\leq {|B^{m_\varepsilon+1}|\over |B_g|} M(f, B_{m_{\varepsilon+1}})+{|B^{m_\varepsilon+1}|\over |B^{m_\varepsilon}\setminus B^{m_\varepsilon-1}|}M(f, B_{m_{\varepsilon+1}})\\
&\leq \left( {2^{Q(m_\varepsilon+1)}\over 2^{Q(m_\varepsilon-j_\varepsilon-i_\varepsilon)}}
+{2^{Q(m_\varepsilon+1)}\over 2^{Qm_\varepsilon}-2^{Q(m_\varepsilon-1)}}\right)M(f, B_{m_{\varepsilon+1}})\\
&\leq \left( 2^{Q(1+j_\varepsilon+i_\varepsilon)}+2^{Q+1}\right)M(f, B_{m_{\varepsilon+1}})
<\varepsilon
\end{align*}

For the case $\overline B_g\cap \overline B_{g'}\neq\emptyset$ and  $g,g'\in B^{m_\varepsilon-1}$, we may assume $B_g\neq B_{g'}$ and $r_{B_g}\leq r_{B_{g'}}$. By \eqref{rbxy},
 $B_{g'}\subset 5B_g\subset 15B_{g'}$. 
 If $g'\in B^{j_\varepsilon+1}$, then by \eqref{ieps}, we have
\begin{align*}
\left|\phi_{\varepsilon}(g)-\phi_{\varepsilon}(g') \right|
&=\left|f_{B_g}-f_{B_{g'}}  \right|\leq \left|f_{B_g}-f_{3B_{g'}}  \right|+\left|f_{B_{g'}}-f_{3B_{g'}}  \right|\\
&\leq\left( {|3B_{g'}|\over |B_g|} +{|3B_{g'}|\over |B_{g'}|}\right)M(f, 3B_{g'})\leq \left(15^Q+3^Q\right)M(f, 3B_{g'})\\
&\leq\left(15^Q+3^Q\right)\varepsilon.
\end{align*}
If $g'\notin B^{j_\varepsilon+1}$, then $3B_{g'}\cap B^{j_\varepsilon}=\emptyset$, by \eqref{jeps}, we have
$$\left|\phi_{\varepsilon}(g)-\phi_{\varepsilon}(g') \right|\leq \left(3^Q+15^Q\right)M(f, 3B_{g'})\leq \left(15^Q+3^Q\right)\varepsilon.$$
Therefore, \eqref{gx-gy} holds by taking $\alpha_3=15^Q+3^Q$.

\medskip
{\it Proof of \eqref{gx-gy1}:}
\medskip


Since $g_1\in B_{g}$, $g_2\in B_{g'}$, we have $B_{g_1}\cap B_g\neq\emptyset$ and $B_{g_2}\cap B_{g'}\neq\emptyset$, by \eqref{gx-gy},
\begin{align*}
\left|\phi_{\varepsilon}(g_1)-\phi_{\varepsilon}(g_2) \right|
&\leq \left|\phi_{\varepsilon}(g_1)-\phi_{\varepsilon}(g) \right|+\left|\phi_{\varepsilon}(g)-\phi_{\varepsilon}(g') \right|+\left|\phi_{\varepsilon}(g')-\phi_{\varepsilon}(g_2) \right|\\
&\leq 2\alpha_3\varepsilon+\left|\phi_{\varepsilon}(g)-\phi_{\varepsilon}(g') \right|.
\end{align*}
We may assume $B_g\neq B_{g'}$ and $r_{B_g}\leq r_{B_{g'}}$.
If $g,g'\in\mathcal G\setminus B^{m_\varepsilon-1}$, then \eqref{gx-gy1} follows from \eqref{gx-gy}.
If $g,g'\in\ B^{m_\varepsilon-1}$, 
when $ g'\in B^{j_\varepsilon+1}$, then $2^{-i_\varepsilon}\leq r_{B_g}\leq r_{B_{g'}}\leq 2^{-i_\varepsilon+1}$, thus  $B_{g'}\subset 10B_g\subset 60B_{g'}$, by \eqref{ieps}, we have
\begin{align*}
\left|\phi_{\varepsilon}(g)-\phi_{\varepsilon}(g') \right|
&\leq \left|f_{B_g}-f_{6B_{g'}}  \right|+\left|f_{B_{g'}}-f_{6B_{g'}}  \right|=\left( {|6B_{g'}|\over |B_g|} +{|6B_{g'}|\over |B_{g'}|}\right)M(f, 6B_{g'})\\
&\leq \left(60^Q+6^Q\right)M(f, 6B_{g'})\leq\left(60^Q+6^Q\right)\varepsilon.
\end{align*}
When $g'\notin B^{j_\varepsilon+1}$, then there exist $\tilde m_0\in \mathbb N$ and $\tilde m_0\geq j_\varepsilon+2$ such that $g'\in B^{\tilde m_0}\setminus B^{\tilde m_0-1}$. Since $2B_{g}\cap 2B_{g'}\neq \emptyset$, we have $B_g\subset 6B_{g'}$. 
Note that $6B_{g'}\cap B^{\tilde m_0-2}=\emptyset$, (in fact, for any $\tilde g\in 6B_{g'}$, $d(0,\tilde g)\geq d(0, g')-d(g',\tilde g)\geq 2^{\tilde m_0-1}-6\cdot2^{\tilde m_0-j_\varepsilon-i_\varepsilon}>2^{\tilde m_0-2}$), thus $B_g\cap B^{\tilde m_0-2}=\emptyset$ and then ${1\over 2}r_{B_{g'}}=2^{\tilde m_0-1-j_\varepsilon-i_\varepsilon}\leq r_{B_g}\leq 2^{\tilde m_0-j_\varepsilon-i_\varepsilon}=r_{B_{g'}}$. Therefore, $B_{g'}\subset 10 B_g$. Then by \eqref{jeps}, we have
$$\left|\phi_{\varepsilon}(g)-\phi_{\varepsilon}(g') \right|\leq \left(60^Q+6^Q\right)M(f, 6B_{g'})<\left(60^Q+6^Q\right)\varepsilon.$$
If $g\in B^{m_\varepsilon-1}$ and $g'\in\mathcal G\setminus B^{m_\varepsilon-1}$, since $2B_{g}\cap 2B_{g'}\neq \emptyset$, by the construction of $B_g$ we can see that $g\in B^{m_\varepsilon-1}\setminus B^{m_\varepsilon-2}$ and $g'\in B^{m_\varepsilon}\setminus B^{m_\varepsilon-1}$. Thus, $B_{g'}\subset 10 B_{g}\subset 40 B_{g'}$. Then by \eqref{lmeps}, we have
$$\left|\phi_{\varepsilon}(g)-\phi_{\varepsilon}(g') \right|\leq \left(40^Q+4^Q\right)M(f, 4B_{g'})<\left(40^Q+4^Q\right)\varepsilon.$$
Taking $\alpha_4=60^Q+6^Q+2\alpha_3$, then \eqref{gx-gy1} holds.
%
%
%
%
\end{proof}

\bigskip
{\bf Acknowledgement:} P. Chen, X. T. Duong and J. Li are supported by ARC DP 160100153. P. Chen is also  supported by NNSF of China 11501583, Guangdong Natural Science Foundation 2016A030313351 and the Fundamental Research Funds for the Central Universities 161gpy45.
Q.Y. Wu is supported by NSF of China (Grants No. 11671185 and No. 11701250), the Natural Science Foundation of Shandong Province (No. ZR2018LA002), and the State Scholarship Fund of China (No. 201708370017).

\bibliographystyle{amsplain}

\end{document}